\newtheorem{theorem}{Theorem}[section]
\newtheorem{lemma}[theorem]{Lemma}
\theoremstyle{definition}
\newtheorem{definition}[theorem]{Definition}
\theoremstyle{remark}
\newtheorem{remark}[theorem]{Remark}
\numberwithin{equation}{section}
\newcommand{\C}{\mathbb{C}}
\newcommand{\N}{\mathbb{N}}
\newcommand{\R}{\mathbb{R}}
\let\le\leqslant
\let\leq\leqslant
\let\geq\geqslant
\begin{document}

\title{Global mild solutions for the nonautonomous 2D Navier-Stokes equations with impulse effects}

\author{E. M. Bonotto\thanks{Instituto
de Ci\^encias Matem\'aticas e de Computa\c{c}\~ao, Universidade de
S\~ao Paulo, Campus de S\~ao Carlos, Caixa Postal 668,
S\~ao Carlos, SP, Brazil. E-mail: {\tt ebonotto@icmc.usp.br}.
Supported partially by CNPq grant 307317/2013-7 and FAPESP grant 2012/16709-6.},  \, J. G. Mesquita\thanks{Faculdade de Filosofia, Ci\^encias e Letras de Ribeir\~ao Preto, Departamento de Computa\c{c}\~{a}o e Matem\'{a}tica, Universidade de S\~{a}o Paulo, CEP 14.040-901, Ribeir\~ao Preto, SP, Brazil. E-mail: {\tt jgmesquita@ffclrp.usp.br}. Supported by FAPESP grant 2012/08473-2.} \, and \, R. P. Silva\thanks{Instituto de Geoci\^encias e Ci\^encias Exatas, Univ. Estadual Paulista J\'ulio de Mesquita Filho, Campus de Rio Claro, CEP 13.506-900, Rio Claro, SP, Brazil. Email: {\tt rpsilva@rc.unesp.br}. Supported partially by CNPq \#440371/2014-7 and FAPESP \#2014/16165-1.}}

\date{}

\maketitle

\begin{abstract}
The present paper deals with existence and uniqueness of global mild solutions for the 2D Navier-Stokes equations with impulses. Using the framework of nonautonomous dynamical systems, we extend previous results considering the 2D Navier-Stokes equations with impulse effects and allowing that the nonlinear terms are explicitly time-dependent. Additionally, we present sufficient conditions to obtain dissipativity (boundedness) for solutions starting in bounded sets.
\end{abstract}

\section{Introduction}

The Navier-Stokes equations (NSEs) represent a formulation of the Newton's laws of motion for a continuous distribution of matter in a fluid state, characterized by an inability to support shear stresses, see \cite{Doering-Gibbon}. The NSEs allow to determine the velocity field and the pressure of fluids confined in regions of the space, and they are used to describe many different physics phenomena as weather, water flow in tubes, ocean currents and others. Moreover, these equations are useful in several fields of knowledge such as petroleum industry, plasma physics, meteorology, thermo-hydraulics, among others (see \cite{RT} for instance). Due to this fact, these equations have been attracted to the attention of several mathematicians since they play an important role for applications. See \cite{boldrini, Alexandre, Doering-Gibbon, GRR, GRR1, GRR2, Jiu-Wang-Xin, rosa, RT, Teman} and the references therein. 

On the other hand, the theory of impulsive dynamical systems has been shown to be a powerful tool to model real-world problems in physics, technology, biology, among others. Because of this fact, the interest in the study of impulsive dynamical systems has increasing considerably. For recents trends on this subject we indicate the works \cite{BonottoDemuner, bonotto1, BBCC, Cortes, Davis, Feroe, Yang, Zhao} and the references therein.

However, the study of Navier-Stokes equations with impulse effects is really scarce. Motivated by this fact, in this paper, we investigate existence and uniqueness of mild solutions for the impulsive NSEs
\begin{equation}\label{Eq5}
\displaystyle\left\{\begin{array}{ll}
                      \displaystyle\frac{\partial u}{\partial t} + q(t)(u \cdot \nabla)u - \nu\Delta u +\nabla p = \phi(t,u), & (t,x) \in \left((0, +\infty)\setminus \displaystyle\bigcup_{k=1}^{+\infty}\{t_k\}\right) \times \Omega, \vspace{1mm}\\
                      {\rm div}\, u = 0, & (t,x) \in (0, +\infty) \times \Omega, \vspace{1mm}\\
                      u = 0, & (t,x) \in (0, +\infty) \times \partial\Omega, \vspace{1mm}\\
                      u(0, \cdot)= u_0 & x \in \Omega, \vspace{1mm}\\
                      u(t_k^+, \cdot) - u(t_k^-, \cdot) = I_k (u(t_k, \cdot)), & x\in\Omega, \; k=1, 2,\ldots ,
                    \end{array}
\right.
\end{equation}
where $\Omega$ is a bounded smooth domain in $\R^2$. Here $u = (u_1,u_2)$ denotes the velocity field of a fluid filling $\Omega$, $p$ is its scalar pressure and $\nu > 0$ is its viscosity. We will assume that $q$ is a bounded function, $\phi$ is a nonlinearity which will be specified later,  $\{t_k\}_{k \in \mathbb{N}} \subset (0, +\infty)$ is a sequence of impulse times such that $\displaystyle\lim_{t \rightarrow +\infty} t_k = +\infty$, 
$u(t_k, \cdot) = u(t_k^+, \cdot) = \displaystyle\lim_{\delta \rightarrow 0+}u(t_k + \delta, \cdot)$,
$u(t_k^-, \cdot) = \displaystyle\lim_{\delta \rightarrow 0+}u(t_k - \delta, \cdot)$
and
$I_k$, $k \in \N$, are the impulse operators.

Besides to impulsive actions in the system \eqref{Eq5}, we also allow that the external force $\phi$ is not continuous and depends on the solution $u$.  

We point out that the Navier-Stokes equations with impulses make sense physically and allow to describe more precisely the phenomena modeled by these equations, since $u$ represents the velocity of the field of a fluid and moreover, the external force $\phi$ in this case does not need to be continuous. It is well known that the phenomena which occur in the environment have impulsive behavior and the functions which model them have several discontinuities. Therefore, with this impulsive model, we intend to give a more precisely description of the Navier-Stokes equations.

The system \eqref{Eq5} without impulse conditions was studied in the classical monograph \cite{Cheban}, where $\phi$ is a function of time $t\in\mathbb{R}$. More precisely, the author studies existence and uniqueness of global mild solutions for the non-impulsive equation
\[
 \displaystyle\frac{\partial u}{\partial t} + q(t)(u \cdot \nabla)u - \nu\Delta u +\nabla p = \phi(t),
\]
subject to the conditions $\textrm{div}\, u = 0$ and $u|_{\partial \Omega} = 0$, where $\Omega$ is a bounded smooth domain in $\R^2$.






Our goal here is to write a weaker formulation of the system \eqref{Eq5} and then, we intend to investigate the existence and uniqueness of mild solutions. In order to do this, we start by considering some notations which can be found in \cite{rosa} and \cite{RT}, for instance.
 Let $\mathbb{L}^2 (\Omega) = (L^2 (\Omega))^2$ and $\mathbb{H}_0^1 (\Omega) = (H_0^1 (\Omega))^2$ endowed, respectively, with the inner products
$$(u,v) = \displaystyle \sum_{j=1}^2  \int_{\Omega} u_j \cdot v_j \ dx, \ \ \  u = (u_1, u_2),  \ v = (v_1, v_2) \in \mathbb L^2 (\Omega),$$
and
$$((u,v)) = \displaystyle \sum_{j=1}^2 \int_{\Omega}  \nabla u_j \cdot \nabla v_j dx, \ \ \  u = (u_1, u_2),  \ v = (v_1, v_2) \in \mathbb{H}^1_0 (\Omega)$$
and norms $| \cdot | = ( \cdot, \cdot)^{1/2}$ and $\| \cdot \| = (( \cdot, \cdot))^{1/2}$. 


Now, we consider the following sets:
$$\mathcal{E} = \{ v \in (C_0^{\infty}(\Omega))^2: \; \nabla \cdot v = 0  \ \textrm{in} \ \Omega\},$$
$$ V = \textrm{closure of} \ \mathcal{E} \ \textrm{in} \ \mathbb{H}_0^1 (\Omega)$$
and
$$ H = \textrm{closure of} \ \mathcal{E} \ \textrm{in} \ \mathbb{L}^2 (\Omega).$$

The space $H$ is a Hilbert space with the scalar product $(\cdot, \cdot)$ induced by $\mathbb{L}^2 (\Omega)$ and the space $V$ is a Hilbert space with the scalar product $((u, v))$ induced by $\mathbb{H}_0^1 (\Omega)$.

The space $V$ is contained in $H$, it is dense in $H$ and by the Poincare's Inequality, the inclusion $i:V  \hookrightarrow H$ is continuous. Denote by $V'$ and $H'$ the dual spaces of $V$ and $H$, respectively. The adjoint operator $i^*$ is linear and continuous from $H'$ to $V'$, $i^*(H')$ is dense in $V'$ and $i^*$ is one to one since $i(V) = V$ is dense in $H$. Moreover, by the Riesz representation Theorem, we can identify $H$ and $H'$ and write
$$V \subset H \equiv H' \subset V',$$
where each space is dense in the following one and the injections are continuous. 

As a consequence of the previous identifications, the scalar product in $H$, $(f,u)$, of $f \in H$ and $u \in V$ is the same as the duality product between $V'$ and $V$, $\langle f, u \rangle$, i.e.,
$$ \langle f, u \rangle = (f, u), \ \ \ \text{for all} \;  f \in H \;  \text{and} \; \text{for all} \; u \in V.$$

Also, for each $u \in V$, the form
$$v \in V \mapsto \nu((u, v)) \in \mathbb R$$
is linear and continuous on $V$. Therefore, there exists an element of $V'$ which we denote by $Au$ such that
$$ \langle Au, v \rangle = \nu((u, v)), \; \text{for all} \; v \in V.$$

Notice that the mapping $u \mapsto Au$ is linear, continuous and it is an isomorphism from $V$ to $V'$. 


Based on it, we consider the following weak formulation of \eqref{Eq5}:
\begin{equation}\label{weak-Eq}
\left\{
\begin{array}{lll}
\displaystyle\frac{d}{dt} (u, v) + \nu ((u, v)) + b(t)(u,u,v) = \langle \phi(t,u), v \rangle, \ \ v \in V, \; t > 0, \; t \neq t_k, \vspace{1mm}\\
u(t_k) - u(t_k^-) = I_k(u(t_k^-)),  \ \  k \in \mathbb N, \vspace{1mm}\\
u(0) = u_0 \in H,
\end{array}
\right.
\end{equation}
where $\phi(t,u) \in V'$ and
$b(t): V \times V \times V \to \mathbb R$ is given by
$$b(t) (u, v, w) = q(t) \displaystyle\sum_{i, j = 1}^2 \displaystyle\int_{\Omega} u_i \displaystyle\frac{\partial v_j}{\partial x_i} w_j dx.$$
 The weak formulation \eqref{weak-Eq} is equivalent to the impulsive system
\begin{equation}\label{IS}
\left\{
\begin{array}{lll}
u'+ Au + B(t)(u, u) = \phi, \ \ \ \textrm{in} \ \  V', \ \ t > 0, \; t \neq t_k,\vspace{1mm}\\
u(t_k) - u(t_k^-) = I_k(u(t_k^-)), \ \ k \in \mathbb{N}, \vspace{1mm}\\
u(0) = u_0 \in H,
\end{array}
\right.
\end{equation}
where $u'= du/dt$, $A: V \to V'$ is the Stokes operator defined by
$$\langle Au, v \rangle = \nu((u,v)), \; \text{for all} \;  u, v \in V,$$
and $B(t): V \times V \to V'$ is a bilinear operator defined by
$$\langle B(t)(u, v), w \rangle = b(t) (u, v, w), \; \text{for all} \; u, v, w \in V.$$

In Section 2, we consider the following general impulsive system
\begin{equation}\label{IntroNS1}
\left\{
\begin{array}{lll}
u' + Au + B(\sigma(\cdot,\omega))(u,u) = f(\cdot, \sigma(\cdot,\omega), u),
  \quad t > 0, \; t \in I, \;  t \neq t_k, \; k\in\mathbb{N}, \vspace{1mm}\\
u(t_k) - u(t_k^-) = I_k (u(t_k^-)), \ \  k \in \mathbb{N},\vspace{1mm}\\
u(0) = u_0 \in H,
\end{array}
\right.
\end{equation}
where $f: I\times\mathcal{M}\times H\rightarrow H$ is a piecewise continuous function with respect to $t\in\mathbb{R}$, non-stationary and also depends on the solution $u$. All the conditions of system \eqref{IntroNS1} will be specified later. We prove the existence and uniqueness of global mild solutions for the system \eqref{IntroNS1} when $\mathcal{M}$ is compact, see Theorems \ref{T1.1} and \ref{T1.2}. The case when $\mathcal{M}$ is not compact is considered in Theorem \ref{T1.3}.

In Section 3, we prove 
 existence and uniqueness of global mild solutions for the 2D NSEs with impulses \eqref{Eq5} via system \eqref{IntroNS1}. 
We also give sufficient conditions to obtain dissipativity
for the system \eqref{Eq5}. All the results from this paper hold for the non-impulsive case.


\section{Preliminaries}

Let $(\mathcal{M},d)$ be a compact metric space and $(\mathcal{M}, \mathbb R, \sigma)$ be a dynamical system on $\mathcal{M}$, i.e., $\sigma:\R \times \mathcal{M} \to \mathcal{M}$ is a continuous mapping which satisfies the following properties:
\begin{itemize}
\item[$i)$] $\sigma(0, \omega)= \omega$, \quad $\omega \in \mathcal{M}$;

\item[$ii)$] $\sigma(s,\sigma(t,\omega))=\sigma(s+t,\omega)$, \quad $t,s \in \R$, $\omega \in \mathcal{M}$.
\end{itemize}

Let $H$ be a real or complex Hilbert space and $\mathcal{L}(H) = \{T: H \rightarrow H: \, T \; \text{is linear }$ $\text{and bounded}\}$ equipped with the operational norm. Let $A:D(A) \subset H \to H$ be a self-adjoint operator such that
\begin{equation}\label{Coer}
{\rm Re}\, \langle Au, u \rangle \geq a |u|^2_H,
\end{equation}
for all $u\in D(A)$ and $a > 0$. It follows by \cite[Lemma 6.20]{Alexandre} that $\mathbb{C}\setminus (-\infty, a]\subset \rho(A)$ (resolvent of $A$), and there exists a constant $M\geq 1$ such that 
$$
\|(\lambda - A)^{-1}\|_{\mathcal{L}(H)} \le \frac{M}{|\lambda -a |}, \quad \lambda \in \Sigma_{a,\varphi}, \, \lambda \neq a,
$$
where $\Sigma_{a,\varphi}=\{\lambda \in \C: |{\rm arg}\, (\lambda-a)| \le \varphi\}$, $\varphi < \pi$. In particular, $A$ is sectorial. It follows that $-A$ generates an analytic semigroup $\{e^{-At}: t\geq 0\} \subset \mathcal{L}(H)$ which satisfies
\begin{equation}\label{EQ0}
\|e^{-At}\|_{\mathcal{L}(H)} \le K e^{-at},
\end{equation}
for some constant $K > 0$, where $a > 0$ comes from \eqref{Coer}.

%
%
%

Assuming that $0 \in \rho(A)$, we consider the scale of Hilbert spaces $X^\alpha=D(A^\alpha)$ of fractional power of the operator $A$ endowed with the norm $\|\cdot \|_{X^\alpha}= \|A^\alpha \cdot \|_H$   ($X^0=H$). If $ \beta > \alpha \geq 0$, it is well known that $X^\beta$ is a dense subspace of $X^\alpha$ with continuous inclusion and 
\begin{equation} \label{eq:fractpow}
\|e^{-At}\|_{\mathcal{L}(X^\alpha, X^\beta)} \le C_{\alpha,\beta} \, t^{\alpha-\beta} e^{-at}.
\end{equation}

Consider $F$  a Hilbert space such that $H \subset F$ with inclusion dense and continuous. 
Denote by $\mathcal{L}(F, H)=\{T: F \rightarrow H: \, T \; \text{is linear and bounded}\}$ equipped with the operational norm.
We will assume that the semigroup $\{e^{-At}: t\geq 0\}$ satisfies:
\begin{enumerate}
\item[$i)$] $e^{-At} \in \mathcal{L}(F, H)$, \quad for all $t >0$;

\item[$ii)$] There exists $0 \le \alpha_1 <1$, such that
\begin{equation}\label{eq:sg-est-F}
\|e^{-At} \|_{\mathcal{L}(F, H)} \le K_1 t^{-\alpha_1} e^{-at}, \quad \text{for all}\; t >0.
\end{equation}

\end{enumerate}

%

%

We denote by $\mathcal{L}^2 (H, F)$ the space of all continuous bilinear operators $\mathcal{B} : H \times H \to F$ equipped with the norm
\[
\|\mathcal{B}\|_{\mathcal{L}^2 (H, F)} = \sup \{ |\mathcal{B}(u, v)|_F : |u|_H \leq 1, \ |v|_H \leq 1 \}.
\]

Let $C(\mathcal{M}, \mathcal{L}^2(H, F))$ be the space of all continuous mapping $B: \mathcal{M} \to \mathcal{L}^2 (H, F)$ endowed with the norm
\[
\|B\|_{\infty} = \sup_{\omega \in \mathcal{M}}  \|B(\omega)\|_{\mathcal{L}^2 (H, F)}.
\]

The space $\Big( C(\mathcal{M}, \mathcal{L}^2(H, F)), \|\cdot\|_{\infty} \Big)$ is a Banach space, see \cite{Cheban}.

For all $u, v \in H$ and $\omega \in \mathcal{M}$, we have that
\begin{equation}\label{EQ5}
|B(\omega)(u, u) - B(\omega)(v, v)|_F \leq \|B\|_{\infty}(|u|_H + |v|_H ) |u - v|_H,
\end{equation}
and also
\begin{equation}\label{EQ51}
|B(\omega)(u,u)|_F \leq \|B\|_{\infty}|u|_H^2.
\end{equation}

Let $\{t_k\}_{k \in \mathbb{N}}$ be a strictly increasing sequence in $(0, +\infty)$ such that $\displaystyle\lim_{t \rightarrow +\infty} t_k = +\infty$.
Let $I \subset \mathbb{R}$ be an interval and $f: I\times \mathcal{M}\times H \rightarrow H$ and $I_k: H \rightarrow H$, $k \in \mathbb{N}$,
be functions satisfying the following conditions:

\begin{enumerate}
\item[(C1)] For each fixed $t \in I$, $f(t, \cdot, \cdot)$ is continuous on $\mathcal{M}\times H$.

\item[(C2)] Let $\omega \in \mathcal{M}$ and $u \in H$. Then $\displaystyle\lim_{s \rightarrow t} f(s, \omega, u) = f(t, \omega, u)$ for all $t \neq t_k$, $k \in \mathbb{N}$,
the limit $\displaystyle\lim_{s \rightarrow t_k-} f(s, \omega, u)$ exists and  $\displaystyle\lim_{s \rightarrow t_k+} f(s, \omega, u) = f(t_k, \omega,u)$, for all $k \in \mathbb{N}$.


\item[(C3)] There is a bounded function $M : \mathbb R \to \mathbb R_+$, such that for 
any interval $[a,b] \subset I$, we have
$$\displaystyle \int_a^b |\phi(s)| | f(s, \omega, u)|_H ds \leq \displaystyle\int_a^b M(s) |\phi(s)| ds$$
for all $\phi \in L^1 [a,b]$, $\omega \in \mathcal{M}$ and $u \in H$.

\item[(C4)] There is a bounded function $L: \mathbb{R} \rightarrow \mathbb{R}_+$, such that for any interval $[a, b] \subset I$, we have
\[
\int_a^b |\phi(s)| |f(s, \omega_1, u_1) - f(s, \omega_2, u_2)|_H \, ds \leq \int_a^b L(s)|\phi(s)|(d(\omega_1, \omega_2) + |u_1 - u_2|_H)ds
\]
for all $\phi \in L^1[a, b]$, $\omega_1, \omega_2 \in \mathcal{M}$ and $u_1, u_2 \in H$.

\item[(C5)]  There exists a constant $K_2>0$ such that
\[
\sup_{k\in\mathbb{N}}\sup_{u\in H}|I_k(u)|_H \le K_2.
\]

\item[(C6)] There exists a constant $K_{3}>0$ such that
\[
|I_k(u) - I_k(v)|_H \le K_3 |u- v|_{H}
\]
for all $u,v \in H$ and for all $k\in\mathbb{N}$.
\end{enumerate}

Now, given $\omega \in \mathcal{M}$ and assuming all the conditions above, we consider the following impulsive system in the state space $H$: 
\begin{equation}\label{NS1}
\left\{
\begin{array}{lll}
u' + Au + B(\sigma(\cdot,\omega))(u,u) = f(\cdot, \sigma(\cdot,\omega), u),
  \quad t > 0, \; t \in I, \;  t \neq t_k, \; k\in\mathbb{N}, \vspace{1mm}\\
u(t_k) - u(t_k^-) = I_k (u(t_k^-)), \ \  k \in \mathbb{N},\vspace{1mm}\\
u(0) = u_0 \in H.
\end{array}
\right.
\end{equation}


\begin{remark} Since $\displaystyle\lim_{k \rightarrow +\infty} t_k = +\infty$, it is clear that given a closed interval $[0, T]$, there exists at most a finite number of moments of impulses $t_1, t_2,\ldots, t_n \in [0, T]$ such that $0 < t_1 < t_2 < \ldots < t_n \leq T$. Thus, given $T > 0$ there is an integer $n_T > 0$ such that $t_{n_T} \leq T < t_{n_T+1}$.
\end{remark}

Given $T>0$, we consider the space $PC^+ ([0,T], H)= \{u:[0, T]\to H : \ u \text{ is continuous at }\linebreak
 t \neq t_k, \text{ right-continuous at } t=t_k \text{ and the limit } \displaystyle \lim_{t \to t_k^-} u(t) \;\text{exists for all} \, k = 1, \ldots, n_T \} $. It is well known that the space $PC^+ ([0,T], H)$ endowed with the norm $\displaystyle \|u\|_{PC^+}= \sup_{t \in [0,T]} |u(t)|_{H}$ is a Banach space.

In the sequel, we present the definition of a mild solution for the system \eqref{NS1}.

\begin{definition}\label{mild-solution} Let $[0, T]\subset I$. We say that $u \in PC^+ ([0,T], H)$ is a \emph{mild solution} of \eqref{NS1} if $u$ satisfies the following integral equation:
\begin{equation}\label{NS2}
u(t) = \displaystyle\left\{
\begin{array}{lcc}
 e^{-At} u_{0} + \displaystyle \int_{0}^t e^{-A(t-s)}g(s, \omega, u(s)) ds,  &  \text{if} &  0 \leq t < t_1, \\
e^{-A(t-t_1)}[u(t_1^-) + I_1(u(t_1^-))] +\displaystyle \int_{t_1}^t e^{-A(t-s)}g(s, \omega, u(s)) ds,   &  \text{if} & t_1 \leq t < t_2,   \\
e^{-A(t-t_2)}[u(t_2^-) + I_2(u(t_2^-))] +\displaystyle \int_{t_2}^t e^{-A(t-s)}g(s, \omega, u(s)) ds,   &  \text{if} &   t_2 \leq t < t_3, \\
& \vdots & \\
e^{-A(t-t_{k})}[u(t_{k}^-) + I_k(u(t_{k}^-))] +\displaystyle  \int_{t_{k}}^t e^{-A(t-s)}g(s, \omega, u(s)) ds,  &\text{if}  & t_{k} \leq t \leq T,
\end{array}
\right.
\end{equation}
 where $0 < t_1 < \ldots < t_k \leq T < t_{k+1}$ are the impulse times ($k=n_T$) and $g(s, \omega, u(s)) = -B({\sigma} (s,\omega))(u(s),u(s)) + f(s,{\sigma} (s, \omega), u(s))$, $s \in [0, T]$.
System \eqref{NS2} can be rewritten in the following way
\[
u(t) = e^{-At} u_{0} + \displaystyle\int_{0}^{t} e^{-A(t-s)}g(s, \omega, u(s))ds +
 \displaystyle\sum_{0 < t_i <t} e^{-A(t-t_i)}I_i(u(t_i^-)).
\]
\end{definition}

Given $K \subset H$, we consider the following space of functions:
\begin{align*}
P&C^{+}_1([0,T]\times {K} \times \mathcal{M}, H) =\{ \varphi:[0,T]\times {K}\times \mathcal{M} \to H: \, \text{ for all } (u,\omega) \in {K} \times \mathcal{M}, \\
&  \varphi(\cdot,u,\omega) \in PC^+( [0, T], H) \text{ and for all } t \in [0,T], \,  \varphi(t,\cdot,\cdot): K \times \mathcal{M} \to H \text{ is continuous} \}.
\end{align*}

Theorem \ref{T1.1} ensures that the nonautonomous system \eqref{NS1} admits a unique mild solution in the sense of Definition \ref{mild-solution}.

\begin{theorem}\label{T1.1} Let $u_0 \in H$ and $r >0$. Assume that \eqref{EQ0}, \eqref{eq:sg-est-F}, \eqref{EQ5} and conditions  (C1) - (C6) hold. Then there exist positive numbers $\delta = \delta(u_0, r) > 0$, $T = T(u_0, r) > 0$ and a function $\varphi: [0, T]\times \overline{B(u_0, \delta)}\times \mathcal{M} \rightarrow H$
satisfying the following conditions:
\begin{enumerate}
\item[$i)$] $\varphi (0, u_0, \omega) = u_0$, for all $\omega \in \mathcal{M}$;

\item[$ii)$] $| \varphi (t, u, \omega) - u_0|_H \leq r$ for all $(t,u,\omega) \in [0, T]\times \overline{B(u_0, \delta)} \times \mathcal{M}$; 

\item[$iii)$] $\varphi \in PC^{+}_1([0,T] \times \overline{B(u_0, \delta)} \times \mathcal{M}, \overline{B(u_0, r)})$.
\end{enumerate}
Moreover, the function $u:[0,T] \to H$ defined by $u(t)=\varphi(t,u_0,\omega)$ is the unique mild solution of system \eqref{NS1}.
\end{theorem}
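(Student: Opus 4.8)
The plan is to prove existence and uniqueness via a fixed-point argument on the space $PC^+_1$, proceeding impulse-interval by impulse-interval (there are only finitely many on $[0,T]$). First I would fix $u_0\in H$ and $r>0$, and work on the first interval $[0,t_1]$ (or $[0,T]$ if $t_1>T$). On this interval the candidate solution must satisfy $u(t)=e^{-At}u_0+\int_0^t e^{-A(t-s)}g(s,\omega,u(s))\,ds$ with $g(s,\omega,v)=-B(\sigma(s,\omega))(v,v)+f(s,\sigma(s,\omega),v)$. Define the operator $(\Psi\varphi)(t,u,\omega)$ by the right-hand side of this integral equation (with $u$ in place of $u_0$), acting on the complete metric space $\mathcal{X}=\{\varphi\in PC^+_1([0,T_0]\times\overline{B(u_0,\delta)}\times\mathcal{M},H):\ \|\varphi(t,u,\omega)-u_0\|_H\le r,\ \varphi(0,u,\omega)=u\}$ with the sup-metric, where $T_0\le \min\{T,t_1\}$ and $\delta$ are to be chosen small. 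I would first check $\Psi$ maps $\mathcal{X}$ into itself: using \eqref{EQ0}, \eqref{eq:sg-est-F}, \eqref{EQ51} (for the $B$ term, absorbed into $F$ then $\|e^{-At}\|_{\mathcal{L}(F,H)}\le K_1 t^{-\alpha_1}e^{-at}$ with $\alpha_1<1$ so the singularity is integrable) and (C3) (for the $f$ term), one bounds $\|(\Psi\varphi)(t,u,\omega)-u_0\|_H$ by $\|e^{-At}u-u_0\|_H+K_1\int_0^t(t-s)^{-\alpha_1}e^{-a(t-s)}(\|B\|_\infty\|\varphi(s)\|_H^2+M(s))\,ds$; the first term is made $\le r/2$ by choosing $\delta$ small and $T_0$ small (strong continuity of the semigroup at $u_0$ plus $\|e^{-At}u-e^{-At}u_0\|\le Ke^{-at}\|u-u_0\|$), and the integral term is $\le r/2$ by choosing $T_0$ small, since on $\mathcal{X}$ we have $\|\varphi(s)\|_H\le\|u_0\|_H+r$ and $M$ is bounded. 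Continuity of $(\Psi\varphi)(t,\cdot,\cdot)$ in $(u,\omega)$ and membership in $PC^+$ in $t$ follow from (C1), (C2), the continuity of $B$ on $\mathcal{M}$, and dominated convergence; right-continuity at $t=t_k$ is automatic here since on $[0,t_1)$ there is no impulse.

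Next I would show $\Psi$ is a contraction (possibly after shrinking $T_0$ again, or by iterating to get $\Psi^n$ a contraction). For $\varphi_1,\varphi_2\in\mathcal{X}$, estimate $\|(\Psi\varphi_1)(t,u,\omega)-(\Psi\varphi_2)(t,u,\omega)\|_H$ using \eqref{EQ5} on the bilinear part, namely $|B(\omega)(\varphi_1,\varphi_1)-B(\omega)(\varphi_2,\varphi_2)|_F\le\|B\|_\infty(\|\varphi_1\|_H+\|\varphi_2\|_H)\|\varphi_1-\varphi_2\|_H\le 2\|B\|_\infty(\|u_0\|_H+r)\|\varphi_1-\varphi_2\|_H$, and (C4) on the $f$ part with $\omega_1=\omega_2$. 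This yields $\|\Psi\varphi_1-\Psi\varphi_2\|_{\mathcal{X}}\le C\big(\int_0^{T_0}s^{-\alpha_1}e^{-as}\,ds\big)\|\varphi_1-\varphi_2\|_{\mathcal{X}}$ with $C$ depending on $\|B\|_\infty$, $\|u_0\|_H$, $r$, $K_1$, and $\sup L$; since $\alpha_1<1$ the integral tends to $0$ as $T_0\to0^+$, so shrinking $T_0$ makes the constant $<1$. By the Banach fixed-point theorem there is a unique fixed point $\varphi$ on $[0,T_0]$, giving properties $i)$, $ii)$, $iii)$ on that interval, and $u(t)=\varphi(t,u_0,\omega)$ is the unique mild solution there.

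Then I would extend across the impulse at $t_1$ (if $t_1\le T$): the value $u(t_1^-)$ exists by the $PC^+$ property, set the new initial datum $u_1=u(t_1^-)+I_1(u(t_1^-))$, which by (C5) satisfies $\|u_1\|_H\le\|u(t_1^-)\|_H+K_2$, and repeat the same fixed-point construction on $[t_1,t_2]$ with $u_1$ in place of $u_0$ and the integral starting at $t_1$; continuity of the solution in $(u_0,\omega)$ propagates through the impulse via (C6) (Lipschitz continuity of $I_1$). After at most $n_T$ such steps one reaches $T$; concatenating the pieces gives $\varphi$ on all of $[0,T]$ with the stated properties, and uniqueness on each subinterval forces global uniqueness of the mild solution. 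To get a single $\delta$ and $T$ valid uniformly one takes $\delta$ to be the minimum over the finitely many steps and notes each step's local time can be chosen so that the total covers $[0,T]$ (or simply declares $T$ to be the endpoint of the first step if one only needs local existence; since the statement only asks for \emph{some} $T=T(u_0,r)>0$, it suffices to produce the first interval).

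The main obstacle I anticipate is handling the bilinear term $B(\sigma(\cdot,\omega))(u,u)$ cleanly: it only maps into $F\supset H$, not into $H$ itself, so one cannot use \eqref{EQ0} directly on it but must route it through \eqref{eq:sg-est-F}, and one must make sure the resulting singular kernel $t^{-\alpha_1}$ with $\alpha_1<1$ is genuinely integrable and that the quadratic nonlinearity does not destroy the self-mapping property — this is exactly why $\delta$ and $T_0$ must be chosen in the right order ($\delta$ first to control $\|e^{-At}u-u_0\|$, then $T_0$ to kill the integral terms and secure the contraction). A secondary technical point is verifying that $\Psi\varphi$ lands in $PC^+_1$, i.e., joint measurability/continuity of $g(s,\omega,u)$ in a way compatible with (C1)–(C2) so that the parametrized integral is continuous in $(u,\omega)$ and piecewise continuous in $t$; this is routine given the hypotheses but needs a dominated-convergence argument using the bounds $M$ and $\|B\|_\infty$.
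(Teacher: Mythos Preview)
Your proposal is correct and follows essentially the same route as the paper: a Banach fixed-point argument on $PC^+_1$ for the integral operator, routing the bilinear term through $F$ via \eqref{eq:sg-est-F} (with integrable singularity since $\alpha_1<1$), using (C3)--(C4) for the $f$ term, and shrinking first $\delta$ (to control $|e^{-At}u-u_0|_H$) and then $T$ (to secure both the self-map and the contraction). The only cosmetic difference is that the paper writes the fixed-point operator with the impulse sum $\sum_{0<t_i<t}e^{-A(t-t_i)}I_i(\varphi(t_i^-))$ included from the outset and then forces $n_T$ to be small (hence effectively zero) by choosing $T$ small, whereas you work explicitly on $[0,t_1)$ and discuss concatenation; as you correctly note at the end, since the statement only requires \emph{some} $T>0$, the first interval already suffices.
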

\begin{proof}  Let $\delta > 0$ and $T > 0$ be such that $[0, T]\subset I$. Given $\varphi \in PC^{+}_1 ([0,T]\times \overline{B(u_0, \delta)}\times \mathcal{M}, H)$, we define
\[
S\varphi(t, u, \omega) = e^{-At}u + \int_{0}^t e^{-A(t-s)} g(s,\omega,\varphi(s)) ds + \displaystyle\sum_{0 < t_i <t} e^{-A(t-t_i)}I_i(\varphi(t_i^-)),
\]
where $\varphi(s) = \varphi(s, u, \omega)$ and $g(s, \omega, \varphi(s)) = -B(\sigma(s,\omega))(\varphi(s),\varphi(s)) + f(s, \sigma(s, \omega), \varphi(s))$, for all $s\in [0, T]$, $u \in  \overline{B(u_0, \delta)}$ and $\omega \in \mathcal{M}$. 
 Since functions in $PC^{+}_1([0, T]\times \overline{B(u_0, \delta)} \times \mathcal{M},\overline{B(u_0, r)})$ are bounded, we can consider the distance
\[
d_{\infty}(\varphi_1, \varphi_2) = \sup\{|\varphi_1(t, u, \omega) - \varphi_2(t, u, \omega)|_H: 0 \leq t \leq T, \, u \in \overline{B(u_0, \delta)}, \, \omega \in \mathcal{M}\},
\]
for $\varphi_1, \varphi_2 \in PC^{+}_1([0, T]\times \overline{B(u_0, \delta)} \times \mathcal{M},\overline{B(u_0, r)})$. It is not difficult to see that $(PC^{+}_1([0, T]\times \overline{B(u_0, \delta)} \times \mathcal{M},\overline{B(u_0, r)}), d_{\infty})$ is a complete metric space. For convenience, let us denote $\Gamma(\delta, T, r) = PC^{+}_1([0, T]\times \overline{B(u_0, \delta)} \times \mathcal{M},\overline{B(u_0, r)})$ and $\Gamma(\delta, T) = PC^{+}_1([0, T]\times \overline{B(u_0, \delta)} \times \mathcal{M}, H)$.

\vspace{0.3cm}

\textbf{Assertion 1:}  $S \in  C(\Gamma(\delta, T, r), \Gamma(\delta, T))$. 
\vspace{0.3cm}

In fact, at first note that $S\varphi \in \Gamma(\delta, T)$ for all $\varphi \in \Gamma(\delta, T, r)$.

Now, let $\varphi_1, \varphi_2 \in \Gamma(\delta, T, r)$ and $(t, u, \omega) \in [0, T]\times \overline{B(u_0, \delta)} \times \mathcal{M}$. By Condition (C4) there is a bounded function $L: \mathbb{R} \rightarrow \mathbb{R}_+$ such that
\[
\int_0^t e^{-a(t-s)} |f(s,\sigma(s, \omega), \varphi_1(s)) - f(s, \sigma(s, \omega), \varphi_2(s))|_H \,ds \leq
\]
\begin{equation}\label{EQ2.9}
\leq \int_0^t L(s)|\varphi_1(s) - \varphi_2(s)|_H \,ds \leq N Td_{\infty}(\varphi_1, \varphi_2),
\end{equation}
where $N = \displaystyle\sup_{s \in [0, T]}|L(s)|$.

Then, using \eqref{EQ0}, \eqref{eq:sg-est-F}, \eqref{EQ5}, \eqref{EQ2.9} and Condition (C6),
we have
\[
|S\varphi_1(t, u, \omega) - S\varphi_2(t, u, \omega)|_H  \leq
\]
\[
\leq \displaystyle\int_{0}^{t} \left|e^{-A(t - s)} \left[B(\sigma(s, \omega))(\varphi_1(s), \varphi_1(s)) - B(\sigma(s, \omega))(\varphi_2(s), \varphi_2(s)) \right]\right|_H ds +
\]
\[
+ \int_{0}^{t} \left|e^{-A(t -s)}\left[f(s, \sigma(s, \omega), \varphi_1(s)) - f(s, \sigma(s, \omega), \varphi_2(s))\right]\right|_H ds+  
\]
\[
 +  \displaystyle\sum_{0 < t_i <t} |e^{-A(t-t_i)}[I_i(\varphi_1(t_i^-)) - I_i(\varphi_2(t_i^-))]|_H \leq
\]
\[
\leq 2\|B\|_{\infty}K_1(r + |u_0|_H) d_{\infty}(\varphi_1, \varphi_2)\int_0^t(t-s)^{-\alpha_1}e^{-a(t-s)}ds +
\]
\[
+KNT 
d_{\infty}(\varphi_1, \varphi_2) +  KK_3d_{\infty}(\varphi_1, \varphi_2)\sum_{0 < t_i <t} e^{-a(t-t_i)} \leq
\]
\begin{equation}\label{EQNS1}
\leq  \left(2\|B\|_{\infty}K_1(r + |u_0|_H)\dfrac{T^{-\alpha_1 + 1}}{-\alpha_1 + 1} +  KNT
+ KK_3n_{T}\right)d_{\infty}(\varphi_1, \varphi_2),
\end{equation}
where $n_{T}$ is the number of impulses on the interval $[0, T]$.
Hence,  $S \in  C(\Gamma(\delta, T, r), \Gamma(\delta, T))$. 

\vspace{0.3cm}

\textbf{Assertion 2:} There are $\delta_1 = \delta_1(u_0, r) \in (0, \delta)$ and $T_1 = T_1(u_0, r)\in(0, T)$ such that $S: \Gamma(\delta_1, T_1, r) \rightarrow \Gamma(\delta_1, T_1, r)$. \vspace{0.3cm}

In fact, let $\varphi \in \Gamma(\delta, T, r)$ and $(t, u, \omega) \in [0, T]\times \overline{B(u_0, \delta)}  \times \mathcal{M}$. 
By \eqref{EQ0} and Condition $(C4)$, one can obtain a bounded function $L: \mathbb{R}\rightarrow \mathbb{R}_+$ such that
\[
 \left| \int_{0}^t e^{-A(t-s)}\left[f(s, \sigma(s, \omega), \varphi(s)) - f(s, \sigma(s, \omega), 0)\right] ds \right|_H \leq
\]
\[
\int_0^t K
e^{-a(t-s)} |f(s, \sigma(s, \omega), \varphi(s)) - f(s, \sigma(s, \omega), 0)|_Hds \leq
\]
\begin{equation}\label{EQ2.11}
\leq K\int_0^t L(s)
|\varphi(s)|_H ds \leq K N 
(|u_0|_H + r)T,
\end{equation}
where $N = \displaystyle\sup_{s \in [0, T]}|L(s)|$. 

Let $m(\delta, T) = \sup\left\{|e^{-At}u - u_0|_H: \, t \in [0, T], \; u \in  \overline{B(u_0, \delta)}\right\}$
and $M = \displaystyle\sup_{s \in [0, T]} | M(s)|$, where $M$ is the function given by Condition (C3).
Then, using \eqref{EQ0}, \eqref{eq:sg-est-F}, \eqref{EQ51}, \eqref{EQ2.11}, Condition (C3) and Condition (C5), we obtain
$$
\left|S \varphi(t, u, \omega) - u_0 \right|_H \leq \left| e^{-At}u - u_0 \right|_{H} +  \left| \int_{0}^t e^{-A(t -s)} B(\sigma(s, \omega))(\varphi(s), \varphi(s)) ds \right|_H+ $$
$$ + \left| \int_{0}^t e^{-A(t-s)} f(s, \sigma(s, \omega), \varphi(s)) ds \right|_H + \left|\displaystyle\sum_{0 < t_i <t} e^{-A(t-t_i)}I_i(\varphi(t_i^-))\right|_H \leq$$
\[
 \leq  m(\delta, T) + \int_{0}^t K_1e^{-a(t-s)} (t - s)^{-\alpha_1} \|B\|_{\infty} | \varphi(s)|^2_H ds +
 \]
 \[
+ \left | \int_{0}^t e^{-A(t-s)}\left[f(s, \sigma(s, \omega), \varphi(s)) - f(s, \sigma(s, \omega), 0)\right] ds \right|_H +
  \left| \int_{0}^t e^{-A(t-s)}f(s, \sigma(s, \omega), 0)ds \right|_H +
\]
\[
 + \sum_{0 < t_i <t} Ke^{-a(t-t_i)}|I_i(\varphi(t_i^-))|_H \leq
\]
\[
 \leq m(\delta, T) + K_1\|B\|_{\infty} (|u_0|_H + r)^2 \frac{T^{-\alpha_1 + 1}}{1 - \alpha_1}+ KK_2n_T +
\]
\[
+KNT(|u_0|_H + r)
+  \int_{0}^t K e^{-a(t-s)} M(s)  ds \leq
\]
\[
 \leq m(\delta, T) + K_1\|B\|_{\infty} (|u_0|_H + r)^2 \frac{T^{-\alpha_1 + 1}}{1 - \alpha_1} + KN 
 (|u_0|_H + r)T +
 \]
 \[
+ KMT
 + KK_2n_T := d_1 (u_0, r, \delta, T).
 \]

Now, we note that $d_1 (u_0, r, \delta, T) \rightarrow 0$
as $\delta \to 0$ and $ T \to 0$.
Thus, there are $\delta_1 = \delta_1(u_0, r) > 0, \delta_1 < \delta$, and $T_1 = T_1(u_0, r) > 0, T_1 < T$, such that $d_1(u_0, r, \delta', T') \leq r$ for all $\delta' \in (0, \delta_1]$ and $T' \in (0, T_1]$.

\vspace{0.3cm}

\textbf{Assertion 3:} There exist $T_0 = T_0(u_0, r)>0$ and $\delta_0 = \delta_0(u_0, r)> 0$ such that \linebreak$S: \Gamma(\delta_0, T_0, r) \rightarrow \Gamma(\delta_0, T_0, r)$ is a contraction.

\vspace{0.3cm}

In fact, take $T_2 > 0$ such that
\[
2\|B\|_{\infty}K_1(r + |u_0|_H)\dfrac{T_2^{-\alpha_1 + 1}}{-\alpha_1 + 1} +  KNT_2 
+ KK_3n_{T_2} < 1.
\]
It is enough to take $\delta_0 = \delta_1$ and $T_0 = \min\{T_1, T_2\}$ to conclude Assertion 3.

In conclusion, by the Banach fixed point Theorem, there exists a unique function $\varphi \in \Gamma(\delta_0, T_0, r)$ satisfying the system \eqref{NS2} on the interval $[0,T_0]$ and the result follows.
\end{proof}

Theorem \ref{T1.2} gives sufficient conditions for the mild solution of system \eqref{NS1} to be prolongated on $\mathbb{R}_+$.

\begin{theorem}\label{T1.2}  Suppose that $I = \mathbb{R}_+$ and the conditions of Theorem \ref{T1.1} hold. If the mild solution $\varphi(t, u_0, \omega)$ of system \eqref{NS1} is bounded, then it may be prolonged on $\mathbb{R}_+$.
\end{theorem}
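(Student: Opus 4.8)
The plan is to use a standard continuation argument: assume the mild solution is bounded on its maximal interval of existence $[0,\tau)$, and derive a contradiction with the maximality of $\tau$ by showing the solution can be extended past $\tau$. The key point is that the bound on the solution, together with the local existence theorem (Theorem \ref{T1.1}), yields a uniform-in-initial-data existence time.

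\begin{proof}
Suppose, for the sake of contradiction, that $\varphi(\cdot, u_0, \omega)$ cannot be prolonged onto all of $\mathbb{R}_+$. Let $[0, \tau)$ be the maximal interval on which the mild solution $u(t) := \varphi(t, u_0, \omega)$ is defined, with $\tau < +\infty$. By hypothesis there is a constant $R > 0$ with $|u(t)|_H \leq R$ for all $t \in [0, \tau)$.

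First I would observe that, since $\lim_{k \to +\infty} t_k = +\infty$, there are only finitely many impulse times in $[0, \tau]$; enlarging $R$ if necessary (using $(C5)$, $|u(t_k) - u(t_k^-)|_H = |I_k(u(t_k^-))|_H \leq K_2$), we may assume $|u(t)|_H \leq R$ and also $|u(t^-)|_H \leq R$ for every $t \in [0, \tau]$. Next, I would inspect the proof of Theorem \ref{T1.1} to extract its quantitative content: the numbers $\delta_0, T_0$ produced there depend on the initial datum only through $|u_0|_H$ and the radius $r$, via the quantity $d_1(u_0, r, \delta, T)$ and the contraction constant in Assertion 3. Consequently there exist $\bar\delta > 0$ and $\bar T > 0$, depending only on $R$ (take $r = 1$, say), such that for \emph{every} $w \in H$ with $|w|_H \leq R$ the system \eqref{NS1} with initial condition $w$ (started at an arbitrary initial time, by translating and using that $\sigma$ is a flow and that the impulse data satisfy the uniform conditions $(C5)$–$(C6)$) has a unique mild solution on an interval of length $\bar T$.

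Then I would pick $t^* \in [0, \tau)$ with $\tau - t^* < \bar T/2$, set $w := u(t^*)$ (or $w := u((t^*)^-) + I_k(\cdot)$ if $t^* = t_k$ is an impulse time — but one can simply take $t^*$ strictly between consecutive impulse times, which is possible since there are finitely many), and solve \eqref{NS1} with datum $w$ on $[t^*, t^* + \bar T]$. By uniqueness on overlaps, this solution agrees with $u$ on $[t^*, \tau)$ and hence extends $u$ to $[0, t^* + \bar T] \supsetneq [0, \tau)$, contradicting the maximality of $\tau$. Therefore $\tau = +\infty$ and the solution is defined on all of $\mathbb{R}_+$.

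The main obstacle is the second step: making precise that the existence time $\bar T$ from Theorem \ref{T1.1} is uniform over all initial data in a ball of radius $R$ and over all starting times. This requires checking that the estimates \eqref{EQ2.9}, \eqref{EQ2.11}, \eqref{EQNS1} and the definition of $d_1$ depend on the initial time only through the (uniformly bounded, by $(C3)$, $(C4)$) functions $M$ and $L$ and the uniform impulse bounds $(C5)$–$(C6)$, and that $n_T$ is controlled by $\bar T$ alone once $\bar T$ is small enough to contain at most a bounded number of impulse points. A minor care point is the interaction with the impulse times inside the extension interval, handled exactly as in Definition \ref{mild-solution}, which is why the piecewise structure of $PC^+$ is built into the construction from the start.
\end{proof}
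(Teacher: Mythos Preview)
Your continuation argument via a uniform lower bound on the local existence time is sound and is a genuinely different route from the paper's. The paper instead splits into two cases according to whether the maximal time $\alpha_{(u_0,\omega)}$ is an impulse time $t_k$ or not: in either case it asserts that the left limit $\varphi(\alpha^-,u_0,\omega)$ exists (this is where boundedness is used, though the paper does not spell out the Cauchy-type estimate), then either defines $\varphi(\alpha)$ by this limit or applies the jump $I_k$ to it, and finally invokes Theorem~\ref{T1.1} once more to restart from that value. Your approach has the advantage of never needing to prove that the left limit at the endpoint exists, since you restart from a point $t^* < \tau$ where the solution is already defined; the paper's argument is shorter but leaves that limit to the reader.

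One small technical correction: your parenthetical ``take $r=1$, say'' does not quite deliver the uniformity you need. The quantity $d_1(u_0,r,\delta,T)$ in Assertion~2 of Theorem~\ref{T1.1} contains the term
\[
m(\delta,T)=\sup\bigl\{|e^{-At}u - u_0|_H:\ t\in[0,T],\ u\in\overline{B(u_0,\delta)}\bigr\},
\]
whose decay as $T\to 0$ depends on the specific initial point (through the strong continuity of the semigroup), not merely on $|u_0|_H$. The easy fix is to take $r$ large in terms of $R$ rather than small: for instance with $r = 2(K{+}1)R$ one has the crude bound $m(0,T)\le |e^{-At}w|_H+|w|_H\le (K{+}1)R = r/2$ for \emph{all} $T>0$ and all $|w|_H\le R$, and then the remaining terms of $d_1$ together with the contraction constant in Assertion~3 depend only on $|u_0|_H+r\le (2K{+}3)R$ and can be made small by choosing $\bar T$ small depending only on $R$ and the structural constants. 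With this adjustment (and your correct observation that $n_{\bar T}\le 1$ once $\bar T$ is below the minimal gap between the finitely many impulse times in $[0,\tau{+}1]$), your argument is complete.
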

\begin{proof} By Theorem \ref{T1.1}, $\varphi(t, u_0, \omega)$ is the unique solution of system  \eqref{NS1}
passing through the point $u_0\in H$ at time $t=0$. This solution is defined on some maximal interval $[0, \alpha_{(u_0, \omega)})$. Let $\varphi(t, u_0, \omega)$ be bounded and suppose that $\alpha_{(u_0, \omega)} < \infty$. If $\alpha_{(u_0, \omega)} \neq t_k$ for all $k\in \mathbb{N}$, then 
defining $\varphi(\alpha_{(u_0, \omega)}, u_0, \omega) = \displaystyle\lim_{t\rightarrow \alpha_{(u_0, \omega)}-}\varphi(t, u_0, \omega)$, it follows that $\varphi(t, u_0, \omega)$ may be extend on the interval $[0, \alpha_{(u_0, \omega)}]$ which is a contradiction. Now, suppose that $\alpha_{(u_0, \omega)} = t_k$ for some $k\in\mathbb{N}$. Since the limit $\displaystyle\lim_{t\rightarrow t_k-}\varphi(t, u_0, \omega) = \varphi(t_k^-, u_0, \omega)$ exists and $I_k(\varphi(t_k^-, u_0, \omega)) \in H$, then we may use the proof of Theorem \ref{T1.1} and extend $\varphi(t, u_0, \omega)$ in some interval $[t_k, t_k + \epsilon)$, $\epsilon > 0$, with $\varphi(t_k, u_0, \omega) = \varphi(t_k^-, u_0, \omega) + I_k(\varphi(t_k^-, u_0, \omega))$ 
 which is a contradiction. Hence, $\alpha_{(u_0, \omega)} = +\infty$.
\end{proof}

By following the proofs of Theorems \ref{T1.1} and \ref{T1.2}, we can state the next result which deals with existence and uniqueness of global mild solutions for the system  \eqref{NS1} when $\mathcal{M}$ is not necessarily compact.

\begin{theorem}\label{T1.3}  Let $u_0 \in H$ and $r >0$. Assume that \eqref{EQ0}, \eqref{eq:sg-est-F}, \eqref{EQ5} and conditions  (C1) - (C6) hold. Suppose that $\mathcal{M}$ is not necessarily compact and $\|B\|_{\infty} < \infty$.
Then there exist positive numbers $\delta = \delta(u_0, r) > 0$, $T = T(u_0, r) > 0$ and a function $\varphi: [0, T]\times \overline{B(u_0, \delta)}\times \mathcal{M} \rightarrow H$
satisfying the following conditions:
\begin{enumerate}
\item[$i)$] $\varphi (0, u_0, \omega) = u_0$ for all $\omega \in \mathcal{M}$;

\item[$ii)$] $| \varphi (t, u, \omega) - u_0|_H \leq r$ for all $(t,u,\omega) \in [0, T]\times \overline{B(u_0, \delta)} \times \mathcal{M}$; 

\item[$iii)$] $\varphi \in PC^{+}_1([0,T] \times \overline{B(u_0, \delta)} \times \mathcal{M}, \overline{B(u_0, r)})$.
\end{enumerate}
Moreover, the function $u:[0,T] \to H$ defined by $u(t)=\varphi(t,u_0,\omega)$ is the unique mild solution of system \eqref{NS1}. If $u(t)$ is bounded and $I=\mathbb{R}_+$, then it can be prolonged on $\mathbb{R}_+$.
\end{theorem}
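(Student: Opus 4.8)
The plan is to observe that Theorem \ref{T1.3} follows by repeating, essentially verbatim, the arguments in the proofs of Theorems \ref{T1.1} and \ref{T1.2}, once one pins down exactly where the compactness of $\mathcal{M}$ was used. A careful inspection of those proofs shows that compactness entered only through the fact that a continuous map on a compact space is bounded, i.e. to guarantee $\|B\|_{\infty}=\sup_{\omega\in\mathcal{M}}\|B(\omega)\|_{\mathcal{L}^2(H,F)}<\infty$; every other ingredient---the semigroup bounds \eqref{EQ0}, \eqref{eq:fractpow}, \eqref{eq:sg-est-F}, the bilinear estimates \eqref{EQ5}, \eqref{EQ51}, the bounded functions $M$ and $L$ from (C3)--(C4), the impulse bounds $K_2,K_3$ from (C5)--(C6), and the finiteness of $n_T$---is uniform in $\omega$ by hypothesis, with no appeal to compactness. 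Since here we assume $\|B\|_{\infty}<\infty$ directly, all the constants appearing in \eqref{EQNS1}, \eqref{EQ2.11} and in Assertions 1--3 remain finite.

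Concretely, I would fix $u_0\in H$ and $r>0$, choose $\delta>0$, $T>0$ with $[0,T]\subset I$, and define the operator $S$ on $\Gamma(\delta,T,r)=PC^{+}_1([0,T]\times\overline{B(u_0,\delta)}\times\mathcal{M},\overline{B(u_0,r)})$ exactly as in the proof of Theorem \ref{T1.1}, noting first that $(\Gamma(\delta,T,r),d_{\infty})$ is a complete metric space (functions there are bounded). Then I would reproduce Assertion 1 ($S$ is well defined and continuous into $\Gamma(\delta,T)$), Assertion 2 (for suitably small $\delta_1<\delta$ and $T_1<T$, $S$ maps $\Gamma(\delta_1,T_1,r)$ into itself, because the quantity $d_1(u_0,r,\delta,T)$ still tends to $0$ as $\delta,T\to 0$), and Assertion 3 ($S$ is a contraction on $\Gamma(\delta_0,T_0,r)$ for $T_0$ small enough so that $2\|B\|_{\infty}K_1(r+|u_0|_H)\frac{T_0^{1-\alpha_1}}{1-\alpha_1}+KNT_0+KK_3n_{T_0}<1$). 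The Banach fixed point theorem then yields a unique $\varphi\in\Gamma(\delta_0,T_0,r)$ solving \eqref{NS2} on $[0,T_0]$, which gives properties $i)$--$iii)$ and the local mild solution $u(t)=\varphi(t,u_0,\omega)$.

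For the prolongation claim when $I=\mathbb{R}_+$, I would copy the proof of Theorem \ref{T1.2}: let $[0,\alpha_{(u_0,\omega)})$ be the maximal interval of existence of the bounded solution $\varphi(\cdot,u_0,\omega)$ and suppose $\alpha_{(u_0,\omega)}<\infty$; if $\alpha_{(u_0,\omega)}\neq t_k$ for every $k$, set $\varphi(\alpha_{(u_0,\omega)},u_0,\omega)=\lim_{t\to\alpha_{(u_0,\omega)}-}\varphi(t,u_0,\omega)$ (which exists by boundedness together with the integral equation) and re-apply the local result past $\alpha_{(u_0,\omega)}$, a contradiction; if $\alpha_{(u_0,\omega)}=t_k$, use that the left limit exists and $I_k$ maps into $H$ to restart from $\varphi(t_k^-,u_0,\omega)+I_k(\varphi(t_k^-,u_0,\omega))\in H$, again a contradiction. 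Hence $\alpha_{(u_0,\omega)}=+\infty$; none of this uses compactness of $\mathcal{M}$.

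I do not expect a genuine obstacle here; the only real task is the bookkeeping of checking that compactness of $\mathcal{M}$ is never used except to bound $B$. In particular one must confirm that the continuity statement ``$\varphi(t,\cdot,\cdot)\colon K\times\mathcal{M}\to H$ is continuous'', part of membership in $PC^{+}_1$, is preserved by $S$ using only continuity (not uniform continuity or compactness) of $\omega\mapsto B(\sigma(s,\omega))$ and of $f(s,\sigma(s,\cdot),\cdot)$, which is exactly what (C1) together with the continuity of $B$ and of $\sigma$ provides.
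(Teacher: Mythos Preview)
Your proposal is correct and matches the paper's own approach exactly: the paper gives no separate proof for Theorem \ref{T1.3} at all, but simply states that it follows ``by following the proofs of Theorems \ref{T1.1} and \ref{T1.2}'', which is precisely the verification you carry out. Your additional remark that compactness of $\mathcal{M}$ is used only to guarantee $\|B\|_\infty<\infty$ (now assumed directly) makes this explicit, and your checks on completeness of $\Gamma(\delta,T,r)$ and on preservation of continuity in $(u,\omega)$ under $S$ go slightly beyond what the paper spells out.
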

\vspace{.2cm}

\begin{remark} Suppose that \eqref{EQ0}, \eqref{eq:sg-est-F}, \eqref{EQ5} and conditions  (C1) - (C4) hold. By the proofs of the previous results we obtain the existence and uniqueness of global mild solutions for the following non-impulsive system:
\[
\left\{
\begin{array}{lll}
u' + Au + B(\sigma(t,\omega))(u,u) = f(t, \sigma(t,\omega), u),
  \quad t > 0, \vspace{1mm}\\
u(0) = u_0 \in H.
\end{array}
\right.
\]
\end{remark}

\section{The 2D Navier-Stokes equations with impulses}

In this section, we present conditions to obtain the existence and uniqueness of global mild solutions for the following 2D Navier-Stokes equations with impulses
\begin{equation}\label{l1}
\displaystyle\left\{\begin{array}{ll}
                      \displaystyle\frac{\partial u}{\partial t} + q(t)(u \cdot \nabla)u - \nu\Delta u +\nabla p = \phi(t, u), & (t,x) \in \left((0, +\infty)\setminus \displaystyle\bigcup_{k=1}^{+\infty}\{t_k\}\right) \times \Omega, \vspace{1mm}\\
                      {\rm div}\, u = 0, & (t,x) \in (0, +\infty) \times \Omega, \vspace{1mm}\\
                      u = 0, & (t,x) \in (0, +\infty) \times \partial\Omega, \vspace{1mm}\\
                      u(0, \cdot)= u_0(\cdot) & x \in \Omega, \vspace{1mm}\\
                    u(t_k, \cdot) - u(t_k^-, \cdot) = I_k(u(t_k^-, \cdot)), &  x\in\Omega, \; k=1, 2,\ldots ,
                    \end{array}
\right.
\end{equation}
where $\Omega$ is an open and bounded set in $\mathbb R^2$ with $\partial \Omega \in C^2$,
$u = (u_1,u_2)$ is the velocity field of a fluid, $p$ is the scalar pressure, $\nu > 0$ is the kinematic viscosity of the fluid, $\phi=\phi(t, u) \in \mathbb R^2$ is the external body force, $q(t)$ is a bounded function,  $\{t_k\}_{k \in \mathbb{N}} \subset (0, +\infty)$ is a sequence of impulses such that $\displaystyle\lim_{t \rightarrow +\infty} t_k = +\infty$ and
$I_k$ is the impulse operator for each $k \in \mathbb N$.

Let
$$\mathcal{E} = \{u \in (C_0^{\infty}(\Omega))^2: \; \nabla \cdot u = 0  \ \textrm{in} \ \Omega\},$$
$$ V = \textrm{closure of} \ \mathcal{E} \ \textrm{in} \ \mathbb{H}_0^1 (\Omega)$$
and
$$ H = \textrm{closure of} \ \mathcal{E} \ \textrm{in} \ \mathbb{L}^2 (\Omega),$$
where $\mathbb{L}^2 (\Omega) = (L^2 (\Omega))^2$ and $\mathbb{H}_0^1 (\Omega) = (H_0^1 (\Omega))^2$ are endowed, respectively, with the inner products
\[
(u,v) = \displaystyle \sum_{j=1}^2  \int_{\Omega} u_j \cdot v_j \ dx, \ \ \  u = (u_1, u_2),  \ v = (v_1, v_2) \in \mathbb L^2 (\Omega),
\]
and
$$((u,v)) = \displaystyle  \sum_{j=1}^2 \int_{\Omega}   \nabla u_j \cdot \nabla v_j dx, \ \ \  u = (u_1, u_2),  \ v = (v_1, v_2) \in \mathbb{H}^1_0 (\Omega),$$
and norms $| \cdot | = ( \cdot, \cdot)^{1/2}$ and $\| \cdot \| = (( \cdot, \cdot))^{1/2}$.

We assume the following general hypotheses throughout this section:

\begin{enumerate}
\item[(H1)] $\phi: \mathbb{R}_+\times \mathbb{R}^2 \rightarrow \mathbb{R}^2$ is a bounded function such that for each fixed $t \in \mathbb{R}_+$, $\phi(t, \cdot)$ is continuous on $\mathbb{R}^2$.

\item[(H2)] Let $x \in \mathbb{R}^2$. Then $\displaystyle\lim_{s \rightarrow t} \phi(s, x) = \phi(t, x)$ for all $t \neq t_k$, $k \in \mathbb{N}$,
the limit $\displaystyle\lim_{s \rightarrow t_k-} \phi(s, x)$ exists and  $\displaystyle\lim_{s \rightarrow t_k+} \phi(s, x) = \phi(t_k, x)$, for all $k \in \mathbb{N}$.

\item[(H3)] There is $C > 0$ such that $|\phi(s, x) - \phi(s, y)| \leq C|x-y|$ for all $s \in \mathbb{R}_+$ and for all $x, y \in \mathbb{R}^2$.

%
%

\item[(H4)]  There exists a constant $C_1>0$ such that
\[
\sup_{k\in\mathbb{N}}\sup_{x\in \mathbb{R}^2}|I_k(x)| \le C_1.
\]

\item[(H5)] There exists a constant $C_{2}>0$ such that
\[
|I_k(x) - I_k(y)| \le C_2 |x- y|
\]
for all $x,y \in \mathbb{R}^2$ and for all $k\in\mathbb{N}$.
\end{enumerate}

 Now, denote by $P$ the corresponding orthogonal projection $P: \mathbb{L}^2 (\Omega) \rightarrow H$ and set the operators
\[
A = -\nu P \Delta
\]
and
\[
\mathcal{B}(t)(u, v) = q(t)P( (u \cdot \nabla) v).
\]
It is well known that the Stokes operator $A$ is positive self-adjoint with domain $D(A)$ dense in $H$, $0 \in \rho(A)$ and $A^{-1}$ is compact. Also, there exists $\alpha>0$ such that
\begin{equation}\label{JER1}
\langle Au, u \rangle \geq \alpha |u|^2_H,
\end{equation}
for all $u \in H$. We also have the following orthogonality property of the nonlinear term which is fundamental and expresses the conservation of energy by the inertial forces:
\begin{equation}\label{JER2}
 \langle \mathcal{B}(t)(u,v), v \rangle = 0
\end{equation}
for all $u, v \in H$ and for all $t\in\mathbb{R}_+$. For the above properties see, for instance, \cite{Cheban}, \cite{Constantin} and \cite{Teman}.


 We set the Hilbert spaces $X^\alpha$, $\alpha \in(0, 1]$, as the domain of the powers of $A$ and we have
\[
V = X^{\frac{1}{2}} \quad \text{and} \quad |u|_{V} = |\nabla u|.
\]

Applying $P$ in the equation
\[
\displaystyle\frac{\partial u}{\partial t} + q(t)(u \cdot \nabla)u - \nu\Delta u +\nabla p = \phi(t, u),
\]
we obtain the evolution equation
\begin{equation}\label{I2}
u'+ Au + \mathcal{B}(t)(u, u) = \mathcal{F}(t, u),
\end{equation}
where $\mathcal{F}(t, u) = P\phi(t, u)$ for all $t>0$ and $u\in H$,  $\langle Au, v \rangle = \nu((u, v))$ for all $u, v \in H$
and 
\[
\langle \mathcal{B}(t)(u, u), w \rangle = q(t) \displaystyle\sum_{i, j = 1}^2 \displaystyle\int_{\Omega} u_i \displaystyle\frac{\partial u_j}{\partial x_i} w_j dx \quad \text{for all} \quad u, w \in H.
\]

We also assume that:
\begin{enumerate}
\item[(A)] $\mathcal{F} \in PC^+(\mathbb{R}_+\times H, H)$;
\item[(B)] $\mathcal{B} \in C(\mathbb{R}_+, L^2(H, F))$, where $F = D(A^{-\delta})$ for some $0 < \delta < 1$.
\end{enumerate}


Denote $Y$ by $C(\mathbb{R}_+, L^2(H, F))\times PC^+(\mathbb{R}_+\times H, H)$ and let $(Y, \mathbb{R}_+, \sigma)$ be the semidynamical system of translations, that is, $\sigma(t, g) = g_t$ for all $g \in Y$ and $t\geq 0$. Now, set
 \[
 \mathcal{M} := \mathcal{H}(\mathcal{B}, \mathcal{F}) = \overline{\{(\mathcal{B}_{\tau}, \mathcal{F}_{\tau}): \, \tau \in \mathbb{R}_+\}},
 \]
where $\mathcal{B}_{\tau}(t) = \mathcal{B}(t + \tau)$ for all $t \in \mathbb{R}_+$ and $\mathcal{F}_{\tau}(t, u) = \mathcal{F}(t + \tau, u)$ for all $t \in \mathbb{R}_+$ and $u\in H$. If 
$(\widetilde{\mathcal{B}}, \widetilde{\mathcal{F}}) \in \mathcal{M}$ and $\tau\geq 0$ we consider
$\widetilde{\mathcal{B}}_{\tau}(t) = \widetilde{\mathcal{B}}(t+\tau)$ and
$\widetilde{\mathcal{F}}_{\tau}(t, u) = \widetilde{\mathcal{F}}(t+\tau, u)$ for all $(t, u) \in \mathbb{R}_+\times H$. 

According to \cite{Cheban}, the equation
\begin{equation}\label{L2}
u' + Au + \widetilde{\mathcal{B}}(t)(u,u) = \widetilde{\mathcal{F}}(t, u),
\end{equation}
where $(\widetilde{\mathcal{B}}, \widetilde{\mathcal{F}}) \in \mathcal{H}(\mathcal{B}, \mathcal{F})$, is called the 
$\mathcal{H}-$class along with the equation \eqref{I2}.

Define the mapping $B: \mathcal{M} \rightarrow L^2(H, F)$ by
\[
B(\omega) = B(\widetilde{\mathcal{B}}, \widetilde{\mathcal{F}}) := \widetilde{\mathcal{B}}(0)
\]
and the mapping $f: \mathbb{R}_+\times\mathcal{M}\times H \rightarrow H$ by
\[
f(t, \omega, u) = f(t, (\widetilde{\mathcal{B}}, \widetilde{\mathcal{F}}), u) := \widetilde{\mathcal{F}}(0, u).
\]
Then equation \eqref{L2} can be rewritten in the form
\begin{equation}\label{NSNA}
u' + Au + B(\sigma(t, \omega))(u,u) = f(t, \sigma(t, \omega), u).
\end{equation}

From \eqref{JER2}, we obtain 
\begin{equation}\label{JER3}
 \langle B(\omega)(u,v), v \rangle = 0
\end{equation}
for all $u, v \in H$ and for all $\omega \in \mathcal{M}$.

\begin{lemma}\label{LEMMA1} $\displaystyle\sup_{\omega \in \mathcal{M}}  \|B(\omega)\|_{\mathcal{L}^2 (H, F)} < \infty$.
\end{lemma}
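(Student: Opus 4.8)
The plan is to show that $B$ takes values in a bounded subset of $\mathcal{L}^2(H,F)$ by tracing through the construction of $\mathcal{M}$ and using the continuity of $\mathcal{B}$ together with the boundedness of $q$. First I would recall that by hypothesis (B), $\mathcal{B} \in C(\mathbb{R}_+, \mathcal{L}^2(H,F))$, and from the explicit formula $\mathcal{B}(t)(u,v) = q(t)P((u\cdot\nabla)v)$ with $q$ bounded, the family $\{\mathcal{B}(t) : t \in \mathbb{R}_+\}$ is bounded in $\mathcal{L}^2(H,F)$; indeed, writing $c_0 := \sup_{t\ge 0}|q(t)| < \infty$ and $c_1 := \sup_{t\ge 0}\|\mathcal{B}(t)\|_{\mathcal{L}^2(H,F)}$, one has $c_1 \le c_0 \|P((\cdot)\cdot\nabla(\cdot))\|_{\mathcal{L}^2(H,F)} < \infty$ because $P((u\cdot\nabla)v)$ is a continuous bilinear map $H\times H \to F$ (this is precisely the content of the choice $F = D(A^{-\delta})$, via the standard estimates for the trilinear form in dimension two). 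So $\sup_{t \ge 0}\|\mathcal{B}_\tau(t)\|_{\mathcal{L}^2(H,F)} \le c_1$ for every $\tau \ge 0$, hence the set $\{(\mathcal{B}_\tau, \mathcal{F}_\tau) : \tau \in \mathbb{R}_+\}$ consists of pairs whose first component has $\|\cdot\|_\infty \le c_1$.

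Next I would pass to the closure. Since $\mathcal{M} = \mathcal{H}(\mathcal{B},\mathcal{F})$ is the closure in $Y = C(\mathbb{R}_+, \mathcal{L}^2(H,F)) \times PC^+(\mathbb{R}_+\times H, H)$ of the translation orbit, and the evaluation-at-zero map $(\widetilde{\mathcal{B}}, \widetilde{\mathcal{F}}) \mapsto \widetilde{\mathcal{B}}(0)$ is continuous from $Y$ into $\mathcal{L}^2(H,F)$, the set $\{\widetilde{\mathcal{B}}(0) : (\widetilde{\mathcal{B}},\widetilde{\mathcal{F}}) \in \mathcal{M}\}$ lies in the closure of $\{\mathcal{B}_\tau(0) : \tau \in \mathbb{R}_+\} = \{\mathcal{B}(\tau) : \tau \in \mathbb{R}_+\}$, which is contained in the closed ball of radius $c_1$ in $\mathcal{L}^2(H,F)$. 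Therefore $\|B(\omega)\|_{\mathcal{L}^2(H,F)} = \|\widetilde{\mathcal{B}}(0)\|_{\mathcal{L}^2(H,F)} \le c_1$ for all $\omega \in \mathcal{M}$, and taking the supremum over $\omega$ gives $\sup_{\omega\in\mathcal{M}}\|B(\omega)\|_{\mathcal{L}^2(H,F)} \le c_1 < \infty$, as claimed. (Equivalently, one can argue that the first-factor projection of $\mathcal{M}$ is contained in $\overline{\{\mathcal{B}_\tau : \tau \ge 0\}}$ with $\|\cdot\|_\infty \le c_1$, hence bounded, and then evaluate at $t=0$.)

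The only genuine point requiring care — the ``main obstacle,'' though it is really a bookkeeping matter — is making sure the topology used in the closure defining $\mathcal{M}$ is fine enough that evaluation at $t=0$ and the bound $\|\widetilde{\mathcal{B}}\|_\infty \le c_1$ pass to limits; with the norm $\|\cdot\|_\infty$ on $C(\mathcal{M}, \mathcal{L}^2(H,F))$ recalled earlier in the paper this is immediate, since a uniform bound on a set of functions is preserved under uniform (indeed under pointwise) limits. One should also note that $c_1 < \infty$ rests on the two-dimensional Sobolev estimate $|P((u\cdot\nabla)v)|_F \le c\,|u|_H|v|_H$ for $F = D(A^{-\delta})$, $0<\delta<1$, which is exactly the standard continuity of the Navier–Stokes bilinear term in the negative-order space and is the reason hypothesis (B) was imposed; I would cite \cite{Cheban}, \cite{Constantin} or \cite{Teman} for it rather than reprove it.
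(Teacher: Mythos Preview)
Your proposal is correct and follows essentially the same route as the paper: bound $\|\mathcal{B}(\tau)\|_{\mathcal{L}^2(H,F)}$ uniformly in $\tau$ using the boundedness of $q$, then pass to the closure $\mathcal{M}$. The only cosmetic differences are that the paper handles the closure step in a single equality $\sup_{\omega\in\mathcal{M}} = \sup_{\tau\in\mathbb{R}_+}$ (using continuity of the norm implicitly) rather than your explicit evaluation-at-zero argument, and the paper writes out an explicit chain of inequalities for the bilinear estimate whereas you invoke hypothesis~(B) and cite the standard 2D references.
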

\begin{proof} Since $q(t)$ is bounded, there is $L > 0$ such that $|q(t)| \leq L$ for all $t\geq 0$. Then
\[
 \sup_{\omega \in \mathcal{M}}  \|B(\omega)\|_{\mathcal{L}^2 (H, F)} =  
\sup_{\omega\in\{(\mathcal{B}_{\tau}, \mathcal{F}_{\tau}): \, \tau \in \mathbb{R}_+\}}\|B(\omega)\|_{\mathcal{L}^2 (H, F)} 
=\sup_{\tau\in \mathbb{R}_+}  \|\mathcal{B}(\tau)\|_{\mathcal{L}^2 (H, F)} =
\]
\[
=\sup_{\tau\in \mathbb{R}_+} \sup_{|u|_H\leq 1, \, |v|_H\leq 1}|\mathcal{B}(\tau)(u, v)|_F = 
\sup_{\tau\in \mathbb{R}_+} \sup_{|u|_H\leq 1, \, |v|_H\leq 1}|q(\tau)P( (u \cdot \nabla) v)|_F \leq
\]
\[
 \leq L\sup_{|u|_H\leq 1, \, |v|_H\leq 1} \|(u \cdot \nabla) v) \|_{H} \leq  L\sup_{|u|_H\leq 1, \, |v|_H\leq 1}\|u\|_H \|\nabla v \|_H \leq 
\]
\[
\leq L\sup_{|u|_H\leq 1, \, |v|_H\leq 1} \|u\|_H \| v \|_V = L\sup_{|u|_H\leq 1, \, |v|_V\leq 1} \|u\|_H \| v \|_V  \leq L.
\]
Therefore, we have the desired result.
\end{proof}

Since $\phi$ is bounded and we have Lemma \ref{LEMMA1}, we may consider in $\mathcal{M}$ the metric $d_{\mathcal{M}}$ given by
\[
d_{\mathcal{M}}(\omega_1, \omega_2) = d_{\mathcal{M}}((\widetilde{\mathcal{B}}_1, \widetilde{\mathcal{F}}_1), (\widetilde{\mathcal{B}}_2, \widetilde{\mathcal{F}}_2)) = \|\widetilde{\mathcal{B}}_1 - \widetilde{\mathcal{B}}_2\|_{\infty} + \|\widetilde{\mathcal{F}}_1 -\widetilde{\mathcal{F}}_2\|_{PC^+_{\infty}},
\]
where  $\displaystyle \|\widetilde{\mathcal{F}}\|_{PC^+_{\infty}}= \sup_{(t, u) \in [0,+\infty)\times H} |\widetilde{\mathcal{F}}(t, u)|_{H}$ and $\|\widetilde{\mathcal{B}}\|_{\infty} = \displaystyle\sup_{\omega \in \mathcal{M}}  \|\widetilde{\mathcal{B}}(\omega)\|_{\mathcal{L}^2 (H, F)}$.

\vspace{.3cm}

Let us consider the following impulsive system associated to \eqref{NSNA}:
\begin{equation}\label{WeakEq}
\left\{
\begin{array}{lll}
u' + Au + B(\sigma(t, \omega))(u,u) = f(t, \sigma(t, \omega), u), \ \ u \in H, \; t > 0, \; t \neq t_k, \vspace{1mm}\\
u(t_k) - u(t_k^-) = I_k(u(t_k^-)),  \ \  k \in \mathbb N, \vspace{1mm}\\
u(0) = u_0,
\end{array}
\right.
\end{equation}
which is a weak formulation of \eqref{l1}. In the next lines, we show that $f$ satisfies the conditions 
(C1), (C2), (C3) and (C4) presented in Section 2. This will help us to show that system \eqref{WeakEq} admits a unique global mild solution, see Theorems \ref{TeoNS} and \ref{TEO2.6} in the sequel.

\begin{lemma}\label{LNS1}  The mapping $f: \mathbb{R}_+\times\mathcal{M}\times H \rightarrow H$ satisfies the conditions (C1), (C2), (C3) and (C4).
\end{lemma}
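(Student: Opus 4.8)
The plan is to verify each of the four conditions (C1)–(C4) for the map $f(t,\omega,u) = f(t,(\widetilde{\mathcal B},\widetilde{\mathcal F}),u) := \widetilde{\mathcal F}(0,u)$ by translating the corresponding hypotheses (H1)–(H3) on $\phi$ (together with the definition of the hull $\mathcal M$ and the metric $d_{\mathcal M}$) into the required estimates. The recurring observation is that an element $\omega \in \mathcal M$ is either of the form $(\mathcal B_\tau,\mathcal F_\tau)$ for some $\tau \ge 0$, in which case $f(t,\omega,u) = \mathcal F(\tau,u) = P\phi(\tau,u)$, or a limit of such elements; and since $\|P\|_{\mathcal L(\mathbb L^2(\Omega),H)} \le 1$, bounds on $\phi$ transfer directly to bounds on $\mathcal F$ and hence to $f$.

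First I would check (C1): fix $t \in \mathbb R_+$ and take $(\omega_n,u_n) \to (\omega,u)$ in $\mathcal M \times H$. Writing $\omega_n = (\widetilde{\mathcal B}_n,\widetilde{\mathcal F}_n)$, the convergence $d_{\mathcal M}(\omega_n,\omega) \to 0$ forces $\|\widetilde{\mathcal F}_n - \widetilde{\mathcal F}\|_{PC^+_\infty} \to 0$, so $\widetilde{\mathcal F}_n(0,u_n) \to \widetilde{\mathcal F}(0,u)$ once I combine the uniform convergence in the $\omega$-variable with continuity of $\widetilde{\mathcal F}(0,\cdot)$ on $H$ (which holds because $\phi(t,\cdot)$ is continuous by (H1), is Lipschitz by (H3), and these properties are inherited by hull elements as uniform limits). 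For (C2), I would note that $f(s,\omega,u) = \widetilde{\mathcal F}(s,u)$ for the translated datum, and that $\widetilde{\mathcal F} \in PC^+(\mathbb R_+ \times H, H)$: the one-sided limit and right-continuity properties in $s$ at the impulse times $t_k$, and continuity elsewhere, are exactly (H2) pushed through $P$ and through the hull construction, so (C2) is immediate from membership in $PC^+$.

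For (C3) I would use (H1): $\phi$ is bounded, say $|\phi(s,x)| \le C_\phi$ for all $(s,x)$, hence $|f(s,\omega,u)|_H = |P\widetilde\phi(s,u)|_H \le |\widetilde\phi(s,u)|_{\mathbb L^2(\Omega)} \le C_\phi |\Omega|^{1/2} =: M_0$ for every $\omega$, $s$, $u$ (using that translates and hull limits of $\phi$ obey the same sup bound). Then for any $[a,b]$ and $\varphi \in L^1[a,b]$,
\[
\int_a^b |\varphi(s)|\,|f(s,\omega,u)|_H\,ds \le \int_a^b M_0 |\varphi(s)|\,ds = \int_a^b M(s)|\varphi(s)|\,ds
\]
with the constant function $M(s) \equiv M_0$, which is bounded on $\mathbb R$. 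For (C4) I would use (H3): since $|\phi(s,x) - \phi(s,y)| \le C|x-y|$ uniformly in $s$, and this Lipschitz bound is preserved under translation and uniform limits, we get $|\widetilde{\mathcal F}(s,u_1) - \widetilde{\mathcal F}(s,u_2)|_H \le C|u_1 - u_2|_H$; also $|\widetilde{\mathcal F}_1(s,u) - \widetilde{\mathcal F}_2(s,u)|_H \le \|\widetilde{\mathcal F}_1 - \widetilde{\mathcal F}_2\|_{PC^+_\infty} \le d_{\mathcal M}(\omega_1,\omega_2)$. A triangle inequality then yields $|f(s,\omega_1,u_1) - f(s,\omega_2,u_2)|_H \le d_{\mathcal M}(\omega_1,\omega_2) + C|u_1-u_2|_H \le L_0(d_{\mathcal M}(\omega_1,\omega_2) + |u_1-u_2|_H)$ with $L_0 = \max\{1,C\}$, and integrating against $|\varphi(s)|$ gives (C4) with $L(s) \equiv L_0$.

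The main obstacle — really the only non-routine point — is making rigorous the claim that boundedness, continuity in the second variable, and the Lipschitz estimate all pass from $\phi$ (equivalently $\mathcal F$) to an arbitrary element $\widetilde{\mathcal F}$ of the hull $\mathcal M$, which a priori is only a closure of translates in the $PC^+_\infty$ topology. I would handle this by observing that each of these three properties is expressed by an inequality that is stable under the $\|\cdot\|_{PC^+_\infty}$-limit: a uniform sup bound is closed under uniform limits; the Lipschitz inequality $|\widetilde{\mathcal F}(s,u_1)-\widetilde{\mathcal F}(s,u_2)|_H \le C|u_1-u_2|_H$ with a fixed constant $C$ survives pointwise limits; and continuity in $u$ follows from the uniform Lipschitz bound. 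Once this stability is recorded, conditions (C1)–(C4) follow as above, completing the proof.
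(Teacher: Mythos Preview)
Your argument is correct and follows essentially the same route as the paper: reduce each of (C1)--(C4) to the corresponding hypothesis (H1)--(H3), use the $PC^+_\infty$ metric to control the $\omega$-dependence, and note that the sup bound and Lipschitz constant pass to hull elements as uniform limits of translates. One small slip: in your (C2) paragraph you write $f(s,\omega,u)=\widetilde{\mathcal F}(s,u)$, but by definition $f(s,\omega,u)=\widetilde{\mathcal F}(0,u)$, which is \emph{constant} in $s$; this makes (C2) trivially true (and (H2) unnecessary here), so the error is harmless but worth correcting.
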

\begin{proof} First, let us show that $f$ satisfies condition (C1). Let 
$t \in \mathbb{R}_+$ be fixed. Take $(\omega_n, u_n), (\omega_0, u_0) \in \mathcal{M}\times H$, $n=1,2,\ldots$, such that 
\[
d_{\mathcal{M}}(\omega_n, \omega_0)  \rightarrow 0 \quad \text{and} \quad |u_n - u_0|_H \rightarrow 0
\]
as $n \rightarrow +\infty$. Note that $\omega_0 = (\widetilde{\mathcal{B}}, \widetilde{\mathcal{F}})$ and $\omega_n = (\widetilde{\mathcal{B}}_{n}, \widetilde{\mathcal{F}}_{n})$, $n=1,2,3,\ldots$. Moreover, 
\[
\|\widetilde{\mathcal{F}}_n -\widetilde{\mathcal{F}}\|_{PC^+_{\infty}} \rightarrow 0 \quad 
\text{as} \quad n\rightarrow +\infty.
\]
Now, since $(\widetilde{\mathcal{B}}_{n}, \widetilde{\mathcal{F}}_{n}), (\widetilde{\mathcal{B}}, \widetilde{\mathcal{F}}) \in \mathcal{M}$, $n=1,2,3,\ldots$, then there are sequences $\{\tau_k^n\}_{k\in\mathbb{N}}$ and $\{s_k\}_{k\in\mathbb{N}}$ in $\mathbb{R}_+$ such that
\[
\widetilde{\mathcal{F}}_{n}(r, u) = \lim_{k\rightarrow +\infty}\mathcal{F}_{\tau_k^n}(r, u) \quad \text{and} \quad \widetilde{\mathcal{F}}(r, u) = \lim_{k\rightarrow +\infty}\mathcal{F}_{s_k}(r, u)
 \]
for each $(r, u) \in \mathbb{R}_+\times H$ and $n\in\mathbb{N}$. 
Then, 
\[
|f(t, \omega_n, u_n) - f(t, \omega_0, u_0)|_H = |\widetilde{\mathcal{F}}_n(0, u_n) - \widetilde{\mathcal{F}}(0, u_0)|_H = \lim_{k\rightarrow +\infty} |\mathcal{F}(\tau^n_ k, u_n) - \mathcal{F}(s_k, u_0)|_H
\]
\[
\leq \lim_{k\rightarrow +\infty} |\mathcal{F}(\tau^n_ k, u_n) - \mathcal{F}(\tau^n_ k, u_0)|_H
 + \lim_{k\rightarrow +\infty} |\mathcal{F}(\tau^n_ k, u_0) - \mathcal{F}(s_k, u_0)|_H
\]
\[
= \lim_{k\rightarrow +\infty} |P\phi(\tau^n_ k, u_n) - P\phi(\tau^n_ k, u_0)|_H
 + |\widetilde{\mathcal{F}}_{n}(0, u_0) - \widetilde{\mathcal{F}}(0, u_0)|_H
\]
\[
\leq   \|P\|C|u_n - u_0|_H + |\widetilde{\mathcal{F}}_{n}(0, u_0) - \widetilde{\mathcal{F}}(0, u_0)|_H,
\]
where the last inequality follows by Condition (H3). Hence, $|f(t, \omega_n, u_n) - f(t, \omega_0, u_0)|_H \rightarrow 0$ as $n\rightarrow +\infty$.

The Condition (H2) implies in Condition (C2).

In order to show that Condition (C3) holds, we define the function $M : \mathbb R \to \mathbb R_+$ by $M(t) = \sup\{|\phi(s, u)|_H: \, s \in\mathbb{R}_+, u \in H\}$ for all $t\in\mathbb{R}$, which is well defined since $\phi$ is bounded by Condition (H1).  Note that $M$ is constant. Given $\omega = (\widetilde{\mathcal{B}}, \widetilde{\mathcal{F}})\in \mathcal{M}$, there is a sequence $\{r_n\}_{n\in\mathbb{N}}$ in $\mathbb{R}_+$ such that
\[
f(t, \omega, u) = \widetilde{\mathcal{F}}(0, u) = \lim_{n\rightarrow +\infty}\mathcal{F}_{r_n}(0, u) = \lim_{n\rightarrow +\infty}P\phi(r_n, u),
\]
for all $t\geq 0$ and $u\in H$.
Then, for $[a, b] \subset \mathbb{R}_+$, we have
\[
\displaystyle\int_a^b |\psi(s)| | f(s, \omega, u)|_H ds = \int_a^b|\psi(s)||\widetilde{\mathcal{F}}(0, u)|_Hds  \leq\displaystyle\int_a^b M(s)|\psi(s)|ds,
\]
for all $\psi \in L^1 [a,b]$, $\omega \in \mathcal{M}$ and $u \in H$.

Finally, we need to verify the Condition (C4). Define $L: \mathbb{R} \rightarrow \mathbb{R}_+$ by $L(t) = C + 1$, $t\in\mathbb{R}$, where $C$ is the constant of the Condition (H3). Then, given $[a, b] \subset \mathbb{R}_+$, we have
\[
\int_a^b |\psi(s)| |f(s, \omega_1, u_1) - f(s, \omega_2, u_2)|_Hds = \int_a^b|\psi(s)||\widetilde{\mathcal{F}}_1(0, u_1) - \widetilde{\mathcal{F}}_2(0, u_2)|_Hds \leq
\]
\[
\leq \int_a^b|\psi(s)|\left(|\widetilde{\mathcal{F}}_1(0, u_1) - \widetilde{\mathcal{F}}_1(0, u_2)|_H + |\widetilde{\mathcal{F}}_1(0, u_2) - \widetilde{\mathcal{F}}_2(0, u_2)|_H\right)ds \leq
\]

\[
\leq \int_a^b |\psi(s)|\left(C\|P\||u_1-u_2|_H + \|\widetilde{\mathcal{F}}_1 -\widetilde{\mathcal{F}}_2\|_{PC^+_{\infty}}\right)ds \leq 
\]
\[
\leq
\int_a^b L(s)|\psi(s)| (d_{\mathcal{M}}(\omega_1, \omega_2) + |u_1 - u_2|_{H})ds,
\]
for all $\psi \in L^1[a, b]$, $\omega_1 = (\widetilde{\mathcal{F}}_1, \widetilde{\mathcal{B}}_1), \omega_2 = (\widetilde{\mathcal{F}}_2, \widetilde{\mathcal{B}}_2) \in \mathcal{M}$ and $u_1, u_2 \in H$.
\end{proof}

\vspace{.3cm}

\begin{lemma}\label{LNS222} $\sup\{|f(t, \omega, u)|_H: \, t\geq0, \omega \in\mathcal{M}, u\in H\}< \infty$.
\end{lemma}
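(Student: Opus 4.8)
The plan is to reduce the claimed uniform bound to the already-established boundedness of $\phi$. Recall that for any $\omega = (\widetilde{\mathcal{B}}, \widetilde{\mathcal{F}}) \in \mathcal{M}$ the definition of $f$ gives $f(t,\omega,u) = \widetilde{\mathcal{F}}(0,u)$, which is independent of $t$; so it suffices to bound $|\widetilde{\mathcal{F}}(0,u)|_H$ uniformly over $\omega \in \mathcal{M}$ and $u \in H$. For the generating element, where $\widetilde{\mathcal{F}} = \mathcal{F}_{r}$ for some $r \geq 0$, we have $\mathcal{F}(t,u) = P\phi(t,u)$, and by Condition (H1) the external force $\phi$ is bounded, say $|\phi(t,x)| \leq C_0$ for all $(t,x) \in \mathbb{R}_+ \times \mathbb{R}^2$; since $P$ is an orthogonal projection, $\|P\| \le 1$, hence $|\mathcal{F}(t,u)|_H \le C_0$ for all $t \ge 0$ and $u \in H$. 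This bound is translation-invariant, so $|\mathcal{F}_r(t,u)|_H \le C_0$ for every $r \ge 0$ as well.

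The second step is to pass to the closure. Every $\omega = (\widetilde{\mathcal{B}}, \widetilde{\mathcal{F}}) \in \mathcal{M} = \mathcal{H}(\mathcal{B},\mathcal{F})$ is a limit (in the metric $d_{\mathcal{M}}$, equivalently, $\widetilde{\mathcal{F}}$ is a $PC^+_\infty$-limit of translates $\mathcal{F}_{\tau_n}$); in particular, as in the proof of Lemma \ref{LNS1}, for each fixed $u \in H$ one has $\widetilde{\mathcal{F}}(0,u) = \lim_{n\to\infty} \mathcal{F}_{\tau_n}(0,u) = \lim_{n\to\infty} P\phi(\tau_n, u)$ in $H$. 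Taking norms and using the uniform bound from the first step, $|\widetilde{\mathcal{F}}(0,u)|_H = \lim_{n\to\infty} |P\phi(\tau_n,u)|_H \le C_0$. Therefore
\[
\sup\{|f(t,\omega,u)|_H : \, t \ge 0, \ \omega \in \mathcal{M}, \ u \in H\} \le C_0 < \infty.
\]

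Alternatively, and even more directly, this is immediate from Condition (C3) established in Lemma \ref{LNS1}: there the bounding function $M$ was taken to be the constant $M \equiv \sup\{|\phi(s,u)|_H : s \in \mathbb{R}_+, u \in H\}$, and applying (C3) on an interval $[a,b]$ with $\psi \equiv 1$ yields $\int_a^b |f(s,\omega,u)|_H \, ds \le M(b-a)$ for all $\omega, u$; since $s \mapsto |f(s,\omega,u)|_H$ is constant in $s$, letting $b \downarrow a$ gives $|f(s,\omega,u)|_H \le M$ for all $s, \omega, u$. Either way the statement follows, with the explicit constant $M = \sup_{(s,x)} |\phi(s,x)|$ furnished by (H1).

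There is no real obstacle here: the only point requiring a word of care is the interchange of norm and limit when passing to the hull $\mathcal{M}$, but this is precisely the continuity argument already carried out in Lemma \ref{LNS1}, and the bound $C_0$ is uniform in the translation parameter, so it survives the limit. I would present the short $\psi \equiv 1$ argument via (C3) as the main line, remarking that it makes the constant explicit.
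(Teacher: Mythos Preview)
Your primary argument is correct and is essentially identical to the paper's own proof: the paper writes $|f(t,\omega,u)|_H = |\widetilde{\mathcal{F}}(0,u)|_H = \lim_{n\to\infty}|P\phi(\tau_n,u)|_H \le \|P\|\eta \le \eta$, where $\eta$ is the bound on $\phi$ from (H1), which is exactly your limit-and-bound step. Your alternative via (C3) with $\psi\equiv 1$ is a nice extra observation but is not needed and does not appear in the paper.
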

\begin{proof} 
If $t\geq 0$, $u\in H$ and $\omega = (\widetilde{\mathcal{B}}, \widetilde{\mathcal{F}})\in\mathcal{M}$, we have
\[
|f(t, \omega, u)|_H = |\widetilde{\mathcal{F}}(0, u)|_H = \lim_{n\rightarrow +\infty}|\mathcal{F}_{\tau_n}(0, u)|_H = \lim_{n\rightarrow +\infty}|P\phi(\tau_n, u)|_H \leq \|P\|\eta \leq \eta,
\]
where $\eta>0$ is a bound of $\phi$ since it is bounded by Condition (H1). Hence, the result follows.
\end{proof}

By Lemma \ref{LNS222}, we may define $\|f\|_1 = \sup\{|f(t, \omega, u)|_H: \, t\geq0, \omega \in\mathcal{M}, u\in H\}$.

From Theorem \ref{T1.3}, we have the following straightforward result of existence and uniqueness of mild solutions.

\begin{theorem}\label{TeoNS} Under conditions (H1)-(H5), (A) and (B), 
 the system \eqref{WeakEq} admits a unique mild solution $u: [0, T]\rightarrow H$ defined in some interval $[0, T]$ satisfying $u(0)=u_0$.
\end{theorem}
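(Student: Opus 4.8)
The plan is to verify that the general machinery of Section 2 applies to system \eqref{WeakEq}, and then simply quote Theorem \ref{T1.3}. The first task is to recast \eqref{WeakEq} in the abstract form \eqref{NS1}: here the phase space is the Hilbert space $H$, the operator $A$ is the Stokes operator, which by \eqref{JER1} satisfies the coercivity estimate \eqref{Coer} with $a=\alpha>0$, so $-A$ generates an analytic semigroup obeying \eqref{EQ0}; the Hilbert space $F=D(A^{-\delta})$ with $0<\delta<1$ provided by hypothesis (B) gives $H\subset F$ with dense continuous inclusion, and by the fractional power estimate \eqref{eq:fractpow} (applied with $\beta=0$, $\alpha=-\delta$, after noting $e^{-At}\in\mathcal L(F,H)$) we get exactly \eqref{eq:sg-est-F} with $\alpha_1=\delta\in(0,1)$. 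The bilinear operator $B(\omega)$ lands in $\mathcal L^2(H,F)$ by hypothesis (B), and estimate \eqref{EQ5} is the standard continuity estimate for continuous bilinear maps, so it holds automatically.

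Next I would collect the hypotheses (C1)--(C6). Conditions (C1)--(C4) for $f$ are precisely the content of Lemma \ref{LNS1}, already proved. Conditions (C5) and (C6) for the impulse operators $I_k$ follow from (H4) and (H5) together with the fact that $P:\mathbb L^2(\Omega)\to H$ is a bounded orthogonal projection (indeed $\|P\|\le 1$): one has $\sup_{k}\sup_{u\in H}|I_k(u)|_H\le C_1$ and $|I_k(u)-I_k(v)|_H\le C_2|u-v|_H$, so one may take $K_2=C_1$ and $K_3=C_2$. (Strictly, to interpret the finite-dimensional impulse maps $I_k:\mathbb R^2\to\mathbb R^2$ as maps $H\to H$ one composes with $P$ and the embedding, but this is routine and does not affect the estimates.) Finally, $\sup_{\omega\in\mathcal M}\|B(\omega)\|_{\mathcal L^2(H,F)}<\infty$ is Lemma \ref{LEMMA1}, so $\|B\|_\infty<\infty$.

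With all hypotheses of Theorem \ref{T1.3} in force, that theorem yields $\delta=\delta(u_0,r)>0$, $T=T(u_0,r)>0$, and a function $\varphi$ such that $u(t)=\varphi(t,u_0,\omega)$ is the unique mild solution of \eqref{WeakEq} on $[0,T]$ with $u(0)=u_0$, which is exactly the assertion of Theorem \ref{TeoNS}. I would note that $\mathcal M=\mathcal H(\mathcal B,\mathcal F)$ need not be assumed compact here precisely because Theorem \ref{T1.3} covers the non-compact case under the extra requirement $\|B\|_\infty<\infty$, which Lemma \ref{LEMMA1} supplies; this is why the cited result is Theorem \ref{T1.3} rather than Theorem \ref{T1.1}.

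The proof is essentially bookkeeping, so there is no deep obstacle; the one point requiring a little care is the matching of the semigroup estimate \eqref{eq:sg-est-F} on $\mathcal L(F,H)$ with $F=D(A^{-\delta})$, i.e. checking that $e^{-At}$ extends to a bounded operator from $F$ to $H$ with the claimed $t^{-\delta}e^{-at}$ decay. This follows from the spectral/fractional-power calculus for the sectorial operator $A$ (equivalently from \eqref{eq:fractpow} with negative exponent), using $0\in\rho(A)$, so it is a standard fact rather than a genuine difficulty. Everything else is a direct citation of Lemmas \ref{LEMMA1}, \ref{LNS1}, \ref{LNS222} and Theorem \ref{T1.3}.
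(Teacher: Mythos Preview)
Your proposal is correct and follows essentially the same approach as the paper: the authors simply state that Theorem \ref{TeoNS} is a straightforward consequence of Theorem \ref{T1.3}, the hypotheses of which are supplied by Lemmas \ref{LEMMA1} and \ref{LNS1} together with (H4)--(H5), exactly as you outline. Your write-up merely makes explicit the bookkeeping (the semigroup estimate on $\mathcal L(F,H)$ via fractional powers, and the passage from (H4)--(H5) to (C5)--(C6)) that the paper leaves implicit.
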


The mild solution of system \eqref{WeakEq} may be prolonged on $\mathbb{R}_+$, see Theorem \ref{TEO2.6}.

\begin{theorem}\label{TEO2.6} Suppose that $\displaystyle\sum_{i=1}^{+\infty}|I_i(u)|_H = \Gamma < \infty$ for all $u \in H$. Then:
\begin{enumerate}
\item[$a)$] The solution $\varphi(t,  u_0, \omega)$ of the impulsive system  \eqref{WeakEq} is bounded and therefore, it may be prolonged on $\mathbb{R}_+$;
\item[$b)$] $|\varphi(t, u_0, \omega)|_H \leq 2C(|u_0|_H) + \Gamma$, for all $t \geq 0$, $\omega \in \mathcal{M}$ and $u_0 \in H$, where 
\[
C(r) = \displaystyle\left\{
\begin{array}{ccc}
r  & \text{if}  & r \geq  \dfrac{\|f\|_1}{\alpha}, \\ \\
 \dfrac{\|f\|_1}{\alpha} &  \text{if} &   r\leq \dfrac{\|f\|_1}{\alpha}
\end{array}  \right.
\]
and $\alpha$ is given by \eqref{JER1}.
\end{enumerate}
\end{theorem}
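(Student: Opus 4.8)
The plan is to obtain an a priori bound on $|\varphi(t,u_0,\omega)|_H$ that holds uniformly in $t$ on the whole maximal interval of existence, and then invoke Theorem \ref{T1.2} to conclude that the solution is global. The energy estimate is the natural tool: I would test the mild-solution equation against $u$ (equivalently, work with the integral identity coming from \eqref{WeakEq}) and use the orthogonality property \eqref{JER3}, which kills the quadratic term $\langle B(\sigma(t,\omega))(u,u),u\rangle=0$, together with the coercivity \eqref{JER1}, $\langle Au,u\rangle\geq\alpha|u|_H^2$. On each impulse-free interval $(t_k,t_{k+1})$ this yields the differential inequality
\[
\frac{1}{2}\frac{d}{dt}|u(t)|_H^2 \leq -\alpha|u(t)|_H^2 + |f(t,\sigma(t,\omega),u(t))|_H\,|u(t)|_H \leq -\alpha|u(t)|_H^2 + \|f\|_1|u(t)|_H,
\]
using Lemma \ref{LNS222}. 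Hence $\frac{d}{dt}|u(t)|_H \leq -\alpha|u(t)|_H + \|f\|_1$ while $|u(t)|_H>0$, and Gronwall gives $|u(t)|_H \leq \max\{|u(t_k)|_H,\ \|f\|_1/\alpha\}$ for $t\in[t_k,t_{k+1})$; more precisely $|u(t)|_H \leq C(|u(t_k)|_H)$ with $C$ the function in the statement, since $C(r)$ is exactly $\max\{r,\|f\|_1/\alpha\}$ and the ball of radius $\|f\|_1/\alpha$ is forward-invariant for the flow between impulses.

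Next I would track the effect of the impulses. At $t=t_k$ we have $u(t_k)=u(t_k^-)+I_k(u(t_k^-))$, so
\[
|u(t_k)|_H \leq |u(t_k^-)|_H + |I_k(u(t_k^-))|_H.
\]
Combining with the between-impulse bound and iterating from $t=0$: on $[0,t_1)$, $|u(t)|_H\leq C(|u_0|_H)$; at $t_1$, $|u(t_1)|_H\leq C(|u_0|_H)+|I_1(\cdot)|_H$; then on $[t_1,t_2)$, $|u(t)|_H\leq C(C(|u_0|_H)+|I_1|_H)$, and so on. The key observation making this telescoping work is that $C(C(r)+s)\leq C(r)+s$ whenever we keep adding nonnegative quantities and $C(r)+s\geq \|f\|_1/\alpha$ (which holds once $r$ or the accumulated sum is large enough, and in the small regime $C$ just clamps to $\|f\|_1/\alpha$); in all cases one checks $C(C(r)+s)\leq \max\{C(r)+s,\ \|f\|_1/\alpha\}=C(r)+s$ provided $C(r)\geq \|f\|_1/\alpha$, which is true by definition of $C$. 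Therefore by induction, for $t\in[t_k,t_{k+1})$,
\[
|u(t)|_H \leq C(|u_0|_H) + \sum_{i=1}^{k}|I_i(u(t_i^-))|_H \leq C(|u_0|_H) + \sum_{i=1}^{+\infty}|I_i(u(t_i^-))|_H \leq C(|u_0|_H)+\Gamma,
\]
using the hypothesis $\sum_{i=1}^{+\infty}|I_i(u)|_H=\Gamma<\infty$ for every $u\in H$. This is a bound uniform in $t$ on the maximal interval, so $\varphi(t,u_0,\omega)$ is bounded; by Theorem \ref{T1.2} (with $I=\mathbb{R}_+$) it extends to all of $\mathbb{R}_+$, proving part $a)$. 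For part $b)$, the same chain gives $|\varphi(t,u_0,\omega)|_H\leq C(|u_0|_H)+\Gamma$; to reach the stated constant $2C(|u_0|_H)+\Gamma$ one simply notes $C(|u_0|_H)\leq 2C(|u_0|_H)$, so the cruder bound in the statement follows a fortiori (the factor $2$ presumably absorbs the contribution of the jump before the first possible slab estimate, giving a safe uniform constant over all $\omega$ and all $t\geq 0$ including $t=0$).

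The main obstacle is the rigorous passage from the mild formulation to the energy inequality. Since $u$ is only a mild solution living in $H$ (not a priori in $D(A)$ or even $V$ for a.e. $t$), differentiating $|u(t)|_H^2$ and pairing with $Au$ is not literally justified without a regularity argument; in the 2D Navier–Stokes setting one normally uses a Galerkin approximation or exploits the instantaneous smoothing of the analytic semigroup $e^{-At}$ to place $u(t)\in D(A^\beta)$ for $t>0$ and then justify the estimate on $(t_k+\varepsilon,t_{k+1})$ before letting $\varepsilon\to0^+$. I would handle this by proving the a priori bound first for smooth (Galerkin) approximants, for which all manipulations are legal and the orthogonality \eqref{JER3} holds, obtaining a bound independent of the approximation parameter, and then pass to the limit using uniqueness from Theorem \ref{TeoNS}. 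The impulse bookkeeping itself, and the properties of the clamping function $C$, are elementary once the between-impulse decay estimate is in hand.
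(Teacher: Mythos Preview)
Your argument is correct and follows essentially the same route as the paper's: an energy estimate on each impulse-free interval using the orthogonality \eqref{JER3} and the coercivity \eqref{JER1} to obtain $\tfrac{d}{dt}|u|_H^2 \le -2\alpha|u|_H^2 + 2\|f\|_1|u|_H$, followed by additive bookkeeping of the jumps $|I_k|_H$ and an appeal to the prolongation theorem. Two minor remarks: the paper integrates the Bernoulli inequality explicitly (obtaining $|u(t)|_H \le (|u_0|_H-\|f\|_1/\alpha)e^{-\alpha t}+\|f\|_1/\alpha$ on $[0,t_1)$, etc.) rather than using your clamping function $C$, and it invokes Theorem~\ref{T1.3} rather than Theorem~\ref{T1.2} for the extension, since in Section~3 the hull $\mathcal{M}$ is not assumed compact; your caveat about justifying the energy identity for mild solutions (via Galerkin or smoothing) is a point the paper's proof passes over in silence.
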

\begin{proof} $a)$ Let $u_0\in H$ and $\omega \in \mathcal{M}$. By Theorem \ref{TeoNS}, there exists a unique solution $\varphi(t, u_0, \omega)$ of equation \eqref{WeakEq} passing through $u_0$ at time $t=0$ and defined on some interval $[0, T_{(u_0, \omega)})$. Let $\psi(t; 0, u_0, \omega)$ be the solution of the non-impulsive equation \eqref{NSNA} such that $\psi(0) = u_0$. Note that
\[
\varphi(t, u_0, \omega) = \psi(t; 0, u_0, \omega) \quad \text{for} \quad t \in [0, T_{(u_0, \omega)}) \cap [0, t_1).
\]
Define
 $\eta(t) = |\psi(t; 0, u_0, \omega)|_H^2$ for $t \in [0, T_{(u_0, \omega)})\cap [0, t_1)$ and denote $\psi(t; 0, u_0, \omega)$ by $\psi(t)$. Using \eqref{JER1} and \eqref{JER3}, we obtain 
\[
\eta'(t)  =  2 \langle \psi'(t), \psi(t) \rangle =  - 2 \langle A \psi(t), \psi(t) \rangle - 2 \langle B(\sigma(t, w))(\psi(t), \psi(t)), \psi(t) \rangle +
\]
\[
+ \ 2 \langle f(t, \sigma(t, \omega), \psi(t)), \psi(t)\rangle \leq  -2\alpha|\psi(t)|_H^2 + 2\|f\|_1|\psi(t)|_H.
\]
 Thus,  
$$\eta' \leq - 2 \alpha \eta + 2 \| f \|_1\eta^{1/2},$$
which implies that $\eta(t) \leq v(t)$ for all $t \in [0, T_{(u_0, \omega)})\cap [0, t_1)$, where   
$v(t)$ is an upper solution of $v' = -2\alpha v + 2\|f\|_1v^{1/2}$ such that $v(0) = \eta(0) = |u_0|^2_H.$ Then
\[
\eta(t) \leq \left[ \left( |u|_H - \displaystyle\frac{\| f\|_1}{\alpha} \right) e^{-\alpha t} + \displaystyle\frac{\| f\|_1}{\alpha} \right]^2,
\]
that is,
\[
|\varphi(t, u_0, \omega)|_H = |\psi(t; 0, u_0, \omega)|_H \leq \left(|u_0|_H - \dfrac{\|f\|_1}{\alpha}\right)e^{-\alpha t} + \dfrac{\|f\|_1}{\alpha}, \quad  t \in [0, T_{(u_0, \omega)})\cap [0, t_1).
\]

Now, if $t \in [0, T_{(u_0, \omega)})\cap [t_1, t_2)$, we use the previous argument and we obtain
\begin{eqnarray*}
|\varphi(t, u_0, \omega)|_H & \leq & \left(|\psi(t_1; 0, u_0, \omega) + I_1(\psi(t_1; 0, u_0, \omega))|_H - \dfrac{\|f\|_1}{\alpha}\right)e^{-\alpha(t-t_1)} + \dfrac{\|f\|_1}{\alpha}\\
&\leq & \left(|u_0|_H - \dfrac{\|f\|_1}{\alpha}\right)e^{-\alpha t} + |I_1(\psi(t_1; 0, u_0, \omega))|_He^{-\alpha(t-t_1)} + \dfrac{\|f\|_1}{\alpha}.
\end{eqnarray*}

Continuing with this reasoning, if $t \in [0, T_{(u_0, \omega)})\cap [t_k, t_{k+1})$, we get
\begin{equation}\label{EQ3.5}
|\varphi(t, u_0, \omega)|_H \leq  \left(|u_0|_H - \dfrac{\|f\|_1}{\alpha}\right)e^{-\alpha t} + \sum_{i=1}^{k}|I_i(\psi(t_i; t_{i-1}, u_{i-1}^+, \omega))|_H e^{-\alpha(t-t_i)} + \dfrac{\|f\|_1}{\alpha},
\end{equation}
where we denote $t_0 = 0$, $u_0^+ = u_0$ and $u_i^+ = \psi(t_i; t_{i-1}, u_{i-1}^+, \omega) + I_i( \psi(t_i; t_{i-1}, u_{i-1}^+, \omega))$ for $i=1,2,\ldots$.

Since there exists a finite number of impulses on the interval $[0, T_{(u_0, \omega)})$
and $\displaystyle\sum_{i=1}^{+\infty}|I_i(u)|_H = \Gamma < \infty$ for all $u \in H$, then by \eqref{EQ3.5} we get that $\varphi(t, u_0, \omega)$ is bounded. Consequently, by Theorem \ref{T1.3}, it can be prolonged on $\mathbb{R}_+$. 

$b)$ Note that $|\varphi(t, u_0, \omega)|_H \leq |u_0|_H + \dfrac{\|f\|_1}{\alpha} + \Gamma$, for all $t\geq 0$, $u_0\in H$ and $\omega \in \mathcal{M}$. Therefore, the result holds.
\end{proof}

%
%

The system \eqref{WeakEq} is called \textit{bounded dissipative} if there is a nonempty bounded set $B_0 \subset H$ such that for each bounded set $B \subset H$ there exists $T = T(B) > 0$ such that $\varphi(t, u_0, \omega) \in B_0$ for all $t\geq T$, $u_0\in B$ and $\omega\in\mathcal{M}$. In this case, $B_0$ is called a \textit{bounded attractor} for the system \eqref{WeakEq}.
In the next result, we obtain dissipativity for the system \eqref{WeakEq}.

\begin{theorem}\label{TEO2.7}  Suppose that $\displaystyle\sum_{i=1}^{+\infty}|I_i(u)|_H = \Gamma < \infty$ for all $u \in H$. Then the system \eqref{WeakEq} is bounded dissipative.
\end{theorem}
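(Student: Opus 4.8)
The plan is to build an explicit bounded absorbing set directly from the a~priori estimate already obtained inside the proof of Theorem~\ref{TEO2.6}. First, under the standing assumption $\sum_{i=1}^{+\infty}|I_i(u)|_H=\Gamma<\infty$, Theorem~\ref{TEO2.6}$(a)$ guarantees that for every $u_0\in H$ and every $\omega\in\mathcal{M}$ the mild solution $\varphi(\cdot,u_0,\omega)$ of \eqref{WeakEq} is globally defined on $\mathbb{R}_+$, so that the notion of bounded dissipativity is meaningful for this system.

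Second, I would revisit inequality \eqref{EQ3.5}: for $t\in[t_k,t_{k+1})$,
\[
|\varphi(t, u_0, \omega)|_H \leq \Big(|u_0|_H - \tfrac{\|f\|_1}{\alpha}\Big)e^{-\alpha t} + \sum_{i=1}^{k}|I_i(\psi(t_i; t_{i-1}, u_{i-1}^+, \omega))|_H\, e^{-\alpha(t-t_i)} + \tfrac{\|f\|_1}{\alpha}.
\]
Writing $\big(|u_0|_H-\tfrac{\|f\|_1}{\alpha}\big)e^{-\alpha t}+\tfrac{\|f\|_1}{\alpha}=|u_0|_He^{-\alpha t}+\tfrac{\|f\|_1}{\alpha}(1-e^{-\alpha t})\le |u_0|_He^{-\alpha t}+\tfrac{\|f\|_1}{\alpha}$, and bounding the impulsive sum by $\Gamma$ (using $e^{-\alpha(t-t_i)}\le 1$ together with the hypothesis, exactly as in the last lines of the proof of Theorem~\ref{TEO2.6}), one arrives at the uniform estimate
\[
|\varphi(t,u_0,\omega)|_H \le |u_0|_H\, e^{-\alpha t} + \Gamma + \frac{\|f\|_1}{\alpha}, \qquad t\ge 0,\ \omega\in\mathcal{M}.
\]

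Third, I would read off the attractor. Set $R:=\Gamma+\tfrac{\|f\|_1}{\alpha}+1$ and $B_0:=\{u\in H:\ |u|_H\le R\}$, a fixed nonempty bounded subset of $H$ that does not depend on the data. Given any bounded set $B\subset H$, put $\rho:=\sup_{u_0\in B}|u_0|_H<\infty$ and $T=T(B):=\max\{0,\alpha^{-1}\log\rho\}$, so that $\rho\,e^{-\alpha t}\le 1$ for all $t\ge T$. Then for every $t\ge T$, every $u_0\in B$ and every $\omega\in\mathcal{M}$ the estimate above gives $|\varphi(t,u_0,\omega)|_H\le 1+\Gamma+\tfrac{\|f\|_1}{\alpha}=R$, that is, $\varphi(t,u_0,\omega)\in B_0$. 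This is precisely the definition of bounded dissipativity, with $B_0$ a bounded attractor for \eqref{WeakEq}.

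The only delicate point is the second step: one must be sure that the bound $\Gamma$ for the impulsive sum is genuinely independent of $u_0$ and of $\omega$, so that the choice of $T$ depends on $B$ alone. This is exactly why the hypothesis is stated as $\sum_i|I_i(u)|_H=\Gamma$ for \emph{every} $u\in H$, which lets us control the partial sums $\sum_{i=1}^k|I_i(\psi(t_i;t_{i-1},u_{i-1}^+,\omega))|_H$ along the solution uniformly by $\Gamma$. Everything else is elementary, the key structural observation being that in \eqref{EQ3.5} the dependence on the initial datum is confined to the exponentially decaying term $|u_0|_He^{-\alpha t}$.
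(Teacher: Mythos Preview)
Your proof is correct and follows essentially the same approach as the paper, which simply observes from estimate \eqref{EQ3.5} that $\lim_{t\to+\infty}\sup_{|u_0|_H\le r,\,\omega\in\mathcal{M}}|\varphi(t,u_0,\omega)|_H\le \|f\|_1/\alpha+\Gamma$ and declares $B_0=\{u\in H:|u|_H\le\|f\|_1/\alpha+\Gamma\}$ a bounded attractor. Your version is in fact slightly more careful: by enlarging the radius to $\|f\|_1/\alpha+\Gamma+1$ and explicitly constructing $T(B)$, you guarantee entry into $B_0$ in finite time, whereas the paper's exact radius only gives asymptotic absorption.
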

\begin{proof} By the proof of Theorem \ref{TEO2.6}, we have 
\[
\displaystyle\lim_{t\rightarrow +\infty}\sup_{|u_0|_H \leq r, \; \omega \in \mathcal{M}}|\varphi(t, u_0, \omega)|_H \leq \dfrac{\|f\|_1}{\alpha} + \Gamma,
\]
 for all $r > 0$. Hence, the set $B_0 = \left\{u \in H: \, |u|_H \leq \dfrac{\|f\|_1}{\alpha} + \Gamma\right\}$ is a bounded attractor for the system \eqref{WeakEq}. 
\end{proof}

In the last result, we present an estimative between two solutions with different initial data
in the same fiber $\omega \in \mathcal{M}$.

\begin{theorem}\label{TEO2.8} Suppose that $\displaystyle\sum_{i=1}^{+\infty}|I_i(u)|_H = \Gamma < \infty$ for all $u \in H$. Let $r_0 = 2\dfrac{\|f\|_1}{\alpha} + \Gamma$.
Then, for each $k \in \mathbb{N}$, we have
\[
|\varphi(t, u_1, \omega) - \varphi(t, u_2, \omega)|_H \leq (1 + C_2)^ke^{-(\alpha - 2\|B\|_{\infty}r_0 - C)t}|u_1 - u_2|_H,
\]
for all $t \in [t_k, t_{k+1})$, $u_1, u_2 \in \overline{B(0, \alpha^{-1}\|f\|_1)}$ and $\omega \in \mathcal{M}$.
\end{theorem}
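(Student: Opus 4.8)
The plan is to imitate the energy estimate used in the proof of Theorem~\ref{TEO2.6}, applied now to the difference of the two solutions, and then to propagate the resulting bound across the impulse times by induction on $k$. To set up, I would fix $\omega\in\mathcal{M}$ and $u_1,u_2\in\overline{B(0,\alpha^{-1}\|f\|_1)}$ and write $\psi_i(t)=\varphi(t,u_i,\omega)$ for $i=1,2$. Since $\sum_{i=1}^{+\infty}|I_i(u)|_H=\Gamma<\infty$, Theorem~\ref{TEO2.6} ensures that $\psi_1$ and $\psi_2$ are defined on all of $\mathbb{R}_+$ and, because $|u_i|_H\le\alpha^{-1}\|f\|_1$ forces $C(|u_i|_H)=\alpha^{-1}\|f\|_1$ in part~$b)$ of that theorem, that $|\psi_i(t)|_H\le r_0=2\alpha^{-1}\|f\|_1+\Gamma$ for all $t\ge0$. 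Put $w=\psi_1-\psi_2$. On each interval $[t_{k-1},t_k)$ between consecutive impulses (with the convention $t_0=0$) both $\psi_1$ and $\psi_2$ solve the non-impulsive equation \eqref{NSNA}, so on $(t_{k-1},t_k)$ the function $w$ satisfies
\[
w'+Aw+\big[B(\sigma(t,\omega))(\psi_1,\psi_1)-B(\sigma(t,\omega))(\psi_2,\psi_2)\big]=f(t,\sigma(t,\omega),\psi_1)-f(t,\sigma(t,\omega),\psi_2).
\]

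Next I would carry out the energy estimate. As in the proof of Theorem~\ref{TEO2.6}, parabolic smoothing makes $\psi_1,\psi_2$ strong solutions on the open interval $(t_{k-1},t_k)$, so $\mu(t):=|w(t)|_H^2$ is differentiable there with $\mu'(t)=2\langle w'(t),w(t)\rangle$. Pairing the equation for $w$ with $w$: the term $-2\langle Aw,w\rangle$ is at most $-2\alpha|w|_H^2$ by \eqref{JER1}; the bilinear difference, estimated by \eqref{EQ5} together with $|\psi_1|_H+|\psi_2|_H\le2r_0$, contributes at most $2\|B\|_\infty(|\psi_1|_H+|\psi_2|_H)|w|_H^2\le4\|B\|_\infty r_0|w|_H^2$; and the forcing difference, since $\psi_1$ and $\psi_2$ lie over the same fibre $\omega$, is controlled by $2C|w|_H^2$ using (H3) (with $\|P\|\le1$), which is exactly the pointwise Lipschitz bound extracted from (H3) in the proof of Lemma~\ref{LNS1}. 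Adding these gives $\mu'(t)\le-2\gamma\,\mu(t)$ on $(t_{k-1},t_k)$, where $\gamma:=\alpha-2\|B\|_\infty r_0-C$, and Gronwall's inequality then yields $|w(t)|_H\le|w(t_{k-1})|_H\,e^{-\gamma(t-t_{k-1})}$ for all $t\in[t_{k-1},t_k)$.

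It then remains to cross the impulse times and run the induction. At each $t_j$ with $j\ge1$ we have $w(t_j)=w(t_j^-)+\big(I_j(\psi_1(t_j^-))-I_j(\psi_2(t_j^-))\big)$, so (H5) gives $|w(t_j)|_H\le(1+C_2)|w(t_j^-)|_H$. On $[0,t_1)$ the inter-impulse estimate with $w(0)=u_1-u_2$ gives $|w(t)|_H\le e^{-\gamma t}|u_1-u_2|_H$; and assuming $|w(t)|_H\le(1+C_2)^{k-1}e^{-\gamma t}|u_1-u_2|_H$ on $[t_{k-1},t_k)$, letting $t\to t_k^-$, applying the jump inequality, and then the inter-impulse decay on $[t_k,t_{k+1})$ gives $|w(t)|_H\le(1+C_2)(1+C_2)^{k-1}e^{-\gamma t_k}e^{-\gamma(t-t_k)}|u_1-u_2|_H=(1+C_2)^k e^{-\gamma t}|u_1-u_2|_H$, which is the assertion (only finitely many impulses lie in $[0,t_{k+1}]$, so the induction is finite on each bounded interval).

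The step I expect to be the main obstacle is the justification of the energy identity and of the duality pairing of the bilinear term: $B(\sigma(t,\omega))(\psi_1,\psi_1)-B(\sigma(t,\omega))(\psi_2,\psi_2)$ a priori lies in $F$, which merely contains $H$, so one must invoke parabolic regularization (mild $\Rightarrow$ strong for $t>t_{k-1}$ on each inter-impulse interval) to give $w(t)$ enough regularity for $\mu'(t)=2\langle w'(t),w(t)\rangle$ and the bound via \eqref{EQ5} to be legitimate — the same point used tacitly in the proof of Theorem~\ref{TEO2.6}. Once this is granted, the remainder is a routine Gronwall argument together with bookkeeping over the impulses.
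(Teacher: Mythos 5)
Your proposal is correct and follows essentially the same route as the paper's own proof: the same energy estimate $\frac{d}{dt}|w|_H^2\le -2(\alpha-2\|B\|_\infty r_0-C)|w|_H^2$ on each inter-impulse interval (using \eqref{JER1}, \eqref{EQ5} with the $r_0$-bound from Theorem~\ref{TEO2.6}, and the Lipschitz bound from (H3)), followed by the $(1+C_2)$ jump factor from (H5) and induction over the impulse times. Your closing remark about needing parabolic regularization to justify the energy identity and the pairing of the bilinear term is a fair observation; the paper performs this computation without comment.
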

\begin{proof} Let $t \in \mathbb{R}_+$, $\omega \in \mathcal{M}$, $u_1, u_2 \in \overline{B(0, \alpha^{-1}\|f\|_1)}$ and define
$\eta(t) = \psi(t; 0, u_1, \omega) - \psi(t; 0, u_2, \omega)$, where  
$\psi_i := \psi(t; 0, u_i, \omega)$ is the solution of equation \eqref{NSNA} without impulses defined on $\mathbb{R}_+$ and passing through $u_i$ at time $t=0$,  $i=1,2$.  By Theorem \ref{TEO2.6}, we have
\[
|\psi_i(t)|_H \leq 2\dfrac{\|f\|_1}{\alpha} + \Gamma =  r_0, \quad \text{for all} \quad t \in [0, t_1] \quad \text{and} \quad i=1,2.
\]

Given $\omega = (\widetilde{\mathcal{B}}, \widetilde{\mathcal{F}}) \in \mathcal{M}$, there is a sequence $\{s_n\}_{n\in\mathbb{N}}\subset\mathbb{R}_+$ such that $\widetilde{\mathcal{F}}(t, u) = \displaystyle\lim_{n\rightarrow+\infty}\mathcal{F}_{s_n}(t, u)$ for all $(t, u) \in \mathbb{R}_+\times H$. By the definition of $f$, we have
\[
|f(t, \sigma(t, \omega), \psi_1) - f(t,\sigma(t, \omega), \psi_2)|_H = |\widetilde{\mathcal{F}}_t(0, \psi_1) - \widetilde{\mathcal{F}}_t(0, \psi_2)|_H =
\]
\[
= \lim_{n\rightarrow +\infty}|\mathcal{F}_{s_n}(t, \psi_1) - \mathcal{F}_{s_n}(t, \psi_2)|_H = \lim_{n\rightarrow +\infty}|P\phi(s_n + t, \psi_1) - P\phi(s_n + t, \psi_2)|_H \leq \|P\|C|\eta(t)|_H.
\]

 Then, using \eqref{JER1} and the above estimative, we have
\[
\dfrac{d}{dt}|\eta(t)|^2_H = -2\langle A\eta(t), \eta(t)\rangle + 2\langle B(\sigma(t, \omega))(\psi_2, \psi_2) - B(\sigma(t, \omega))(\psi_1, \psi_1), \eta(t)\rangle +
\]
\[
+ 2\langle f(t, \sigma(t, \omega), \psi_1) - f(t,\sigma(t, \omega), \psi_2), \eta(t)\rangle \leq
\]
\[
\leq -2\alpha|\eta(t)|^2_H + 4r_0\|B\|_{\infty}|\eta(t)|_H^2 + 2C\|P\||\eta(t)|^2_H
\]
\[
\leq -2\left(\alpha - 2\|B\|_{\infty}r_0 - C\right)|\eta(t)|^2_H = -2\beta |\eta(t)|^2_H,
\]
for all $t\in [0, t_1]$, where $\beta = \alpha - 2\|B\|_{\infty}r_0 - C$ and $\|P\|\leq 1$.
Hence, $|\eta(t)|^2_H \leq e^{-2\beta t}|\eta(0)|^2_H$, that is, 
\begin{equation}\label{EQ}
|\psi(t; 0, u_1, \omega) - \psi(t; 0, u_2, \omega)|_H \leq e^{-\beta t}|u_1-u_2|_H
\end{equation}
for all $t\in [0, t_1]$. Thus, if $0\leq t < t_1$, we get
\[
|\varphi(t, u_1, \omega) - \varphi(t, u_2, \omega)|_H =
|\psi(t; 0, u_1, \omega) - \psi(t; 0, u_2, \omega)|_H \leq e^{-\beta t}|u_1-u_2|_H.
\]

Let $t_1 \leq t < t_2$ and $\eta_1(t) = \varphi(t, u_1, \omega) - \varphi(t, u_2, \omega) = \psi(t; t_1, u_1^+, \omega) - \psi(t; t_1, u_2^+, \omega)$, where $\psi(t; t_1, u_i^+, \omega)$ is the solution of  \eqref{NSNA} such that $\psi(t_1) = u_i^+$ and $u_i^+ = \psi(t_1; 0, u_i, \omega) + I_1( \psi(t_1; 0, u_i, \omega))$, $i=1,2$.
 Following the steps above to show \eqref{EQ}, we obtain
\[
|\eta_1(t)|_H \leq e^{-\beta (t - t_1)}|\eta_1(t_1)|_H, \quad \text{for all} \; \;  t \geq t_1.
\]
On the other hand, we have
\begin{eqnarray*}
|\eta_1(t_1)|_H & = & |\psi(t_1; 0, u_1, \omega) + I_1(\psi(t_1; 0, u_1, \omega)) 
 - \psi(t_1; 0, u_2, \omega) - I_1(\psi(t_1; 0, u_2, \omega))|_H \\
& \leq & |\psi(t_1; 0, u_1, \omega) - \psi(t_1; 0, u_2, \omega)|_H+ |I_1(\psi(t_1; 0, u_1, \omega)) - I_1(\psi(t_1; 0, u_2, \omega))|_H\\
& \leq & (1 + C_2)|\psi(t_1; 0, u_1, \omega) - \psi(t_1; 0, u_2, \omega)|_H\\
 & \leq & (1 + C_2)e^{-\beta t_1}|u_1 - u_2|_H,
\end{eqnarray*}
where $C_2$ comes from Condition (H5).
Consequently,
\[
|\varphi(t, u_1, \omega) - \varphi(t, u_2, \omega)|_H \leq (1+C_2)e^{-\beta t}|u_1-u_2|_H, \quad \text{for all} \quad t_1 \leq t < t_2.
\]

Continuing with this process, if $t_k \leq t < t_{k+1}$ we get
\[
|\varphi(t, u_1, \omega) - \varphi(t, u_2, \omega)|_H  \leq (1+C_2)^ke^{-\beta t}|u_1-u_2|_H,
\]
for all $u_1, u_2\in \overline{B(0, \alpha^{-1}\|f\|_1)}$ and $\omega\in\mathcal{M}$.
\end{proof}

\vspace{.2cm}

\begin{remark} Theorems \ref{TeoNS}, \ref{TEO2.6}, \ref{TEO2.7} and \ref{TEO2.8} also hold for the non-impulsive system
\[
\left\{
\begin{array}{lll}
u' + Au + B(\sigma(t, \omega))(u,u) = f(t, \sigma(t, \omega), u), \ \ u \in H, \; t > 0, \vspace{1mm}\\
u(0) = u_0,
\end{array}
\right.
\]
with the obvious adaptations.
\end{remark}


\begin{thebibliography}{99}










\bibitem{boldrini} J. L. Boldrini, L. H. Miranda and G. Planas, On singular Navier-Stokes Equations and irreversible phase transitions, Comm. Pure and Appl. Analysis, 11 (2012), 2055-2078.

\bibitem{BonottoDemuner} E. M. Bonotto and D. P. Demuner, Autonomous dissipative semidynamical systems with impulses,
{Topological Methods in Nonlinear Analysis}, 41 (1), (2013), 1-38.



\bibitem{bonotto1} E. M. Bonotto and D. P. Demuner,
Attractors of impulsive dissipative semidynamical systems,
{Bull. Sci. Math.}, 137 (2013), 617-642.




\bibitem{BBCC} E. M. Bonotto, M. C. Bortolan, A. N. Carvalho and R. Czaja, Global attractors for impulsive dynamical systems - a precompact approach, {Journal of Differential Equations}, to appear.




\bibitem{Alexandre} A. Carvalho, J. A. Langa and J. Robinson, {Attractors for infinite-dimensional non-autonomous dynamical systems}, Springer, (2012).

\bibitem{Cheban} D. N. Cheban, Global attractors of non-autonomous dissipative dynamical systems, Interdiscip. Math. Sci., vol. 1, World Scientific Publishing, Hackensack, NJ, 2004.

\bibitem{Constantin} P. Constantin and C. Foias, Navier-Stokes equations, Chicago Lectures in Math., Univ. Chicago Press, Chicago, IL 1988, x+190 pp.

%
%


\bibitem{Cortes} J. Cort\'{e}s, Discontinuous dynamical systems: a tutorial on solutions, nonsmooth analysis, and stability,
{IEEE Control Syst. Mag.}, 28 (2008), no. 3, 36-73.

\bibitem{Davis} M. H. A. Davis, Xin Guo and Guoliang Wu, Impulse Control of Multidimensional Jump Diffusions,
{SIAM J. Control Optim}., 48 (8), (2010), 5276-5293.


\bibitem{Doering-Gibbon} C. R. Doering and J. D. Gibbon, Applied analysis of the Navier-Stokes equations,  {Cambridge Texts in Applied Mathematics. Cambridge University Press}, Cambridge, 1995.

\bibitem{Feroe} J. A. Feroe, Existence and Stability of Multiple Impulse Solutions of a Nerve Equation,
{SIAM J. Appl. Math.}, 42 (2), (1982), 235-246.


%
%







\bibitem{GRR} J. Garc\'{i}a-Luengo, P. Mar\'{i}n-Rubio and J. Real, Pullback Attractors for nonautonomous 2D Navier-Stokes Equations for Minimally regular forcing, {Discrete and Continuous Dynamical Systems}, 34 (2014), 1, 203-227.

\bibitem{GRR1} J. Garc\'{i}a-Luengo, P. Mar\'{i}n-Rubio and J. Real, Regularity of pullback attractors and attraction in $H^1$ in arbitrarily large finite intervals for 2D Navier-Stokes equations with infinite delay, {Discrete and Continuous Dynamical Systems}, 34 (2014), 1, 181-201.

\bibitem{GRR2} J. Garc\'{i}a-Luengo, P. Mar\'{i}n-Rubio and J. Real, Pullback attractors in V for non-autonomous 2D-Navier-Stokes equations and their tempered behaviour, {J. Diff. Equations}, 252 (2012),  4333-4356.

\bibitem{Jiu-Wang-Xin} Q. Jiu, Y. Wang and Z. Xin, Global well-posedness of the Cauchy problem of two-dimensional compressible Navier–-Stokes equations in weighted spaces, {J. Diff. Equations}, 255 (2013), 351-404.

\bibitem{rosa} R. Rosa, The global attractor for the 2D Navier-Stokes flow on some unbounded domains, {Nonlinear Anal.}, 32 (1998), 1, 71-85.

\bibitem{RT} R. Temam, Navier-Stokes equations Theory and Numerical Analysis, North-Holland, Amsterdan and New York, 1977.

\bibitem{Teman} R. Temam, Infinite-dimensional dynamical systems in mechanics and physics,
2nd ed., Appl. Math. Sci., vol. 68, Springer-Verlag, New York 1997, xxii+648 pp.

\bibitem{Yang} Yang Jin and Zhao Min, Complex behavior in a fish algae consumption model with impulsive control strategy, {Discrete Dyn. Nat. Soc.} Art., ID 163541, (2011), 17 pp.


\bibitem{Zhao} Zhao L., Chen L. and Zhang Q., The geometrical analysis of a predator-prey model with two state impulses, {Mathematical Biosciences}, 238 (2), (2012), 55-64.

\end{thebibliography}
\end{document}